\renewcommand{\Delta}{\triangle}
\definecolor{darkblue}{rgb}{0,0,0.7}
\definecolor{darkgreen}{rgb}{0.01,0.75,0.24}
\def \Ee[#1]{\mathcal{E}^{\text{{#1}}}}
\def\R{\mathbf{R}}
\def\pa[#1,#2]{\frac{\partial {#1}}{\partial {#2}} }
\def\idom[#1,#2,#3]{\int_{#1}\hspace{1pt} {#2} \hspace{1pt} \text{d}{#3}}
\def\res[#1,#2]{\left.{#1}\right|_{#2}}
\def \div {\text{div}}
\def\gt{\rightarrow}
\def\var[#1,#2]{\langle \delta \mathcal{E}^{\text{{#1}}}({#2}),v\rangle}
\def\vars[#1,#2,#3]{\langle \delta^2\mathcal{E}^{\text{{#1}}}({#2})v,{#3}\rangle}
\def\vard[#1,#2,#3,#4]{\langle \delta\mathcal{E}^{\text{{#1}}}({#2})-\delta\mathcal{E}^{\text{{#3}}}({#4}),v\rangle}
\def\F{\mathcal{F}}
\def\P{\mathcal{P}}
\def\Rc{\mathcal{R}}
\def\N{\mathbb{N}}
\def\x{\mathbf{x}}
\newcommand{\A}{\mathcal{A}}
\newcommand{\W}{\mathcal{W}}
\newcommand{\wgt}{\rightharpoonup}
\newcommand{\be}{\begin{equation}}
\newcommand{\en}{\end{equation}}
\newcommand{\ben}{\begin{equation*}}
\newcommand{\enn}{\end{equation*}}
\newcommand{\bea}{\begin{aligned}}
\newcommand{\ena}{\end{aligned}}
\def\ba#1\ena{\begin{align}#1\end{align}}
\def\ban#1\enan{\begin{align*}#1\end{align*}}
\theoremstyle{plain}
\newtheorem{theorem}{Theorem}[section]
\newtheorem{definition}[theorem]{Definition}
\newtheorem{lemma}[theorem]{Lemma}
\newtheorem{assumption}[theorem]{Assumptions}
\theoremstyle{remark}
\newtheorem{remark}[theorem]{Remark}
\numberwithin{equation}{section}
\begin{document}
%\title{$\Gamma$-limit of Kullback-Leibler minimization for transition paths in molecular dynamics}
\title[fractional kinetic Fokker-Planck]{An operator splitting scheme for the fractional kinetic Fokker-Planck equation}

\begin{abstract}
In this paper, we develop an operator splitting scheme for the fractional kinetic Fokker-Planck equation (FKFPE).
The scheme consists of two phases: a fractional diffusion phase and a kinetic transport phase. 
The first phase is solved exactly using the convolution operator while the second one is solved approximately
using a variational scheme that minimizes an energy functional with respect to a certain Kantorovich optimal transport cost functional.
We prove the convergence of the scheme to a weak solution to  FKFPE.
As a by-product of our analysis, we also establish a variational formulation for a  kinetic transport equation that is relevant in the second phase. 
Finally, we discuss some extensions of our analysis to more complex systems.
\end{abstract}

\author[M. H. Duong]{Manh Hong Duong }
\address[M. H. Duong]{Department of Mathematics, Imperial College London, London SW7 2AZ, UK}
\email{m.duong@imperial.ac.uk}

\author[Y. Lu]{Yulong Lu}
\address[Y. Lu]{Department of Mathematics, Duke University, Durham NC 27708, USA}
\email{yulonglu@math.duke.edu}

\keywords{Operator splitting methods, variational methods, fractional kinetic Fokker-Planck equation, kinetic transport equation, optimal transportation.}
\subjclass[2010]{Primary: 49S05, 35Q84; Secondary: 49J40.}
%\date{\today}

\maketitle

%The title of your section 1
\section{Introduction}

In this paper, we study the existence of solutions to the following fractional kinetic Fokker-Planck equation (FKFPE)
\begin{equation}
\begin{cases}
\partial_t f+v\cdot\nabla_x f=\div_v(\nabla \Psi(v)f)-(-\Delta_v)^s f \quad \text{in} \quad \R^d\times \R^d\times (0,\infty),\\
f(x,v,0)=f_0(x,v)\quad \text{in} \quad \R^d\times \R^d,
\end{cases}
\label{eq:fracKramers}
\end{equation}
with $s\in (0,1]$. In the above, $\mathrm{div}$ denotes the divergence operator; the differential operators $\nabla, \div$ and $\Delta$ with subscripts $x $  and $v$ indicate that these operators act only on the corresponding variables; the operator $-(-\Delta_v)^s$ is the fractional Laplacian operator on the variable $v$, where the fractional Laplacian $-(-\Delta)^{s}$, is defined by
$$
-(-\Delta)^s f (x) := -\mathcal{F}^{-1} (|\xi|^{2s}  \mathcal{F}[f] (\xi)) (x).
$$
Here $\mathcal{F}$ denotes the Fourier transform on $\R^d$, i.e. $\F [f] (\xi) = \frac{1}{(2\pi)^{d/2}}\int_{\R^d} f(x) e^{-ix\cdot \xi}dx$. 
Note that the fractional Laplacian operator with $0<s<1$ is a non-local operator since it can also be expressed as the singular integral 
$$
-(-\Delta)^s f (x) = -C_{d, s} \int_{\R^d} \frac{f(x)  - f(y)}{|x - y|^{d+2s}} dy, 
$$
where the normalisation constant is given by
$
C_{d, s} = s 2^{2s} \Gamma(\frac{d + 2s}{2})/ (\pi^{\frac{d}{2}} \Gamma(1-s)) 
$ 
%$$
%C_{d, s} = \frac{s 2^{2s} \Gamma(\frac{d + 2s}{2})}{\pi^{\frac{d}{2}} \Gamma(1-s)} 
%$$
and $\Gamma(t)$ is the Gamma function. See \cite{kwasnicki2017ten}  for more equivalent definitions of fractional Laplacian operator.

The equation \eqref{eq:fracKramers} is interesting to us because it can be viewed as the Fokker-Planck (forward Kolmogorov) equation of the following generalized Langevin equation
\begin{equation}
\label{SDE}
\bea
& \frac{d X_t}{d t} = V_t,\\
& \frac{d V_t}{d t} = - \nabla \Psi (V_t) + L_t^s,
\ena
\end{equation}
where $L_t^s$ is the L\'evy stable process with exponent $2s$. The stochastic differential equation (SDE) \eqref{SDE} describes
the motion of a particle moving under the influence of a (generalized) frictional force and a stochastic noise and in the absence of an external force field. 
 FKFPE \eqref{eq:fracKramers} is the evolution of the probability distribution of $(X_t,V_t)$. 
In particular, the fractional operator $-(-\Delta)^s$ is the Markov generator of the process $L_t^s$. 
When $s=1$ and $\Psi(v)=\frac{|v|^2}{2}$, equation \eqref{eq:fracKramers} becomes the classical kinetic Fokker-Planck (or Kramers) 
equation (without external force field)
which is a local PDE and has been used widely in chemistry as a simplified model for chemical reactions~~\cite{Kramers40,HTB90} and
in statistical mechanics~\cite{Nelson1967,Risken}.  The non-local L\'evy process plays an important role in modelling systems that include jumps 
and long-distance interactions such as anomalous diffusion or transport in confined plasma~\cite{Applebaum2009}. Singular limits of Equation~\eqref{eq:fracKramers} with $\Psi(v)=\frac{|v|^2}{2}$ was studied in~\cite{Cesbron2012}, see also~\cite{Cesbron2017} for a similar result for the same equation but on a spatially bounded domain. In a recent work~~\cite{SanchezCesbron2016TMP}, the authors have extended~\cite{Cesbron2012} to a system that contains an additional external force field and they have also proved its well-posedness by the means of the Lax-Milgram theorem. We will prove the existence of solutions of \eqref{eq:fracKramers} for a general $\Psi$ based 
on the trick of operator splitting.  For more recent developments on PDEs involving the fractional Laplacian operator, we refer the interested reader to 
expository surveys~\cite{Vazquez2017, Vazquez2014,Vazquez2012}.

The aim of this paper is to develop a variational formulation for approximating solutions to equation~\eqref{eq:fracKramers}. 
The theory of variational formulation for PDEs took off with the introduction of Wasserstein gradient flows by the seminal work of Jordan, Kinderlehrer and Otto~\cite{JKO98}. 
Such a variational structure has important applications for the analysis of an evolution equation such as providing general methods for proving well-posedness~\cite{AGS08}
and characterizing large time behaviour~(e.g.,~\cite{CarrilloMcCannVillani03}), giving rise to natural numerical discretizations (e.g.,~\cite{DuringMatthesMilisic10}),
and offering techniques for the analysis of singular limits (e.g.,~\cite{SandierSerfaty04, Stefanelli08, AMPSV12, DLPS17}).
There are now a significantly large number of papers in exploring variational structures for local PDEs, see the aforementioned papers and references therein
as well as the monographs~\cite{AGS08, Vil03} for more details. However,  variational formulations for non-local PDEs are less understood.  Erbar~\cite{erbar2014} showed that the fractional heat equation is
a gradient flow of the Boltzmann entropy with respect to a new modified Wasserstein distance that is built from the L\'evy measure and based on
the Benamou-Brenier variant of the Wasserstein distance. Bowles and Agueh~ \cite{AguelBowles2015} proved the existence of the fractional Fokker-Planck equation
\be\label{eq:FFP}
\begin{cases}
\partial_t f =\div_v(\nabla \Psi(v)f)-(-\Delta_v)^s f \quad \text{in} \quad \R^d\times (0,\infty),\\
f(v,0)=f_0(v)\quad \text{in} \quad  \R^d,
\end{cases}
\en
which can be viewed as the spatially homogeneous version of equation~\eqref{eq:fracKramers} or the fractional heat equation with a drift. 
Erbar's proof is variational based on the so-called ``evolution variational inequality" concept introduced in~\cite{AGS08}. 
However, it seems that his method can not be extended to the fractional Fokker-Planck equation since the distance 
that he introduced was particularly tailored for the Boltzmann entropy. Instead, Bowles and Agueh's proof is ``semi-variational" based on a novel splitting argument which we sketch now.
They split up the original dynamics \eqref{eq:FFP} into two processes: 
a fractional diffusion process, namely $\partial_t f=-(-\Delta)^s f$, and a transport process in the field of the potential $\Psi$, namely $\partial_t f=\div(\nabla \Psi f)$, 
and then alternatively run these processes on a small time interval.
Furthermore, the transport process can be understood as a Wasserstein gradient flow of the potential energy.
By adopting a suitable interpolation of the individual processes, they were able to show that the constructed splitting scheme converges to a weak solution of \eqref{eq:FFP}. 
In the literature, the technique of operator splitting is often used to construct numerical methods for solving PDEs, see \cite{HKLR10}. On the theoretical side,
 the idea of splitting had also been used to study the well-posedness of  PDEs, see \cite{CG04, Agueh16} on kinetic equations and \cite{Alibaud07, DGV03} on fractional PDEs.  
 
 In the present work, we adopt the same splitting argument in \cite{AguelBowles2015} to construct a weak solution to the fractional kinetic equation \eqref{eq:fracKramers}. More specifically, we split the dynamics described in \eqref{eq:fracKramers} by two phases: 
 
 \begin{enumerate}
 \item Fractional diffusion phase. At every fixed position $x\in \R^d$, the probability density $f(x, v, t)$, as a function of velocity $v$, evolves according to the fractional heat equation
\be\label{eq:fhe}
\partial_t f = - (-\Delta_v)^{s} f.
\en

\item Kinetic transport phase. The density $f(x,v, t)$ evolves according to the following equation 
\be\label{eq:kt}
\partial_t f + v \cdot \nabla_x f = \mathrm{div}_v(\nabla \Psi(v)f).
\en
 \end{enumerate}
We expect that successive alternative iterating the above two phases with vanishing period of time would give an approximation to the dynamics \eqref{eq:fracKramers}. The key difference between our splitting scheme above and the scheme in \cite{AguelBowles2015} is that the transport process here is not only driven by the potential energy but also the kinetic energy. In~\cite{AguelBowles2015}, the transport process is approximated by a discrete Wasserstein gradient flow based on the work~\cite{KinderlehrerTudorascu06}. However, due to the presence of the kinetic term, the kinetic transport equation is not a Wasserstein gradient flow; thus one can no longer use the Wasserstein distance.  To overcome this obstacle, we employ instead the minimal acceleration cost function and the associated Kantorovich optimal transportation cost functional that has been used in~\cite{Hua00, DPZ13a} for the kinetic Fokker-Planck equation and in~\cite{GW09} for the isentropic Euler system, see Section~\ref{sec: kinetic transport}.
  
\subsection{Main result}
Throughout the paper, we make the following important assumption on the potential $\Psi$.

\begin{assumption}\label{ass:psi}
$\Psi$ is non-negative and $\Psi \in C^{1,1}\cap C^{2,1}(\R^d)$. 
\end{assumption}
We adopt the following notion of weak solution to KFPE \eqref{eq:fracKramers}.
\begin{definition}\label{def:weak}
Let $f_0$ be a non-negative function such that $f_0\in \P^2_a(\R^{2d}) \cap L^p(\R^{2d})$ for some $1 < p \leq \infty$ and $\int_{\R^{2d}} f_0(x, v) \Psi(v)dvdx < \infty$. We say that $f(x,v, t)$ is a weak solution to \eqref{eq:fracKramers} if it satisfies the following:
\begin{enumerate}
\item $\int_{\R^{2d}} f(x, v, t) dxdv = \int_{\R^{2d}} f_0(x, v) dxdv = 1$ for any $t\in(0,T)$. 

\item $f(x,v, t) \geq 0$ for a.e. $(x,v,t) \in \R^{2d}\times (0, T)$. 

\item For any test function $\varphi\in C_c^\infty(\R^{2d} \times (-T, T))$, 
$$
\bea
& \int_{0}^T \int_{\R^{2d}}f(x, v, t) (\partial_t \varphi + v \cdot \partial_x \varphi + \nabla_v \Psi \cdot \nabla_v \varphi
- (-\Delta_v)^{s} \varphi) dt dx dv \\
& \qquad + \int_{\R^{2d}} f_0(x, v) \varphi(0, x, v) = 0.
\ena
$$
\end{enumerate}
\end{definition}

The main result of the paper is the following theorem.
\begin{theorem}\label{thm:main}
Suppose that Assumption \ref{ass:psi} holds. Given a $f_0\in \P^2_a(\R^{2d}) \cap L^p(\R^{2d})$ for some $1 < p \leq \infty$ and $\int_{\R^{2d}} f_0(x, v) \Psi(v)dvdx < \infty$, there exists a weak solution $f(x, v, t)$ to \eqref{eq:fracKramers} in the sense of Definition \ref{def:weak}.
\end{theorem}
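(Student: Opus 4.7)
The plan is to construct a time-discrete approximation by alternating the two sub-steps described in the introduction, derive uniform a priori estimates, extract a compact subsequence, and pass to the limit in the weak formulation.

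\textbf{Step 1: Definition of the splitting scheme.} Fix a time step $\tau > 0$. Starting from $f_\tau^0 := f_0$, I would generate the sequence $\{f_\tau^n\}_{n \geq 0}$ by alternating two half-steps. For the fractional diffusion half-step, the density $f_\tau^{n+1/2}$ is obtained from $f_\tau^n$ by convolving in the $v$-variable (for each fixed $x$) with the fundamental solution of the fractional heat equation at time $\tau$; this yields an exact solution of \eqref{eq:fhe}. For the kinetic transport half-step, $f_\tau^{n+1}$ is defined as a minimizer of a variational problem of JKO-type, built from the minimal-acceleration Kantorovich cost functional $\mathcal{C}_\tau$ referred to in the introduction and Section~\ref{sec: kinetic transport}, penalized against the potential energy $\int \Psi(v) f\,dxdv$. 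Existence of minimizers at each step will follow from the lower semicontinuity and coercivity of the cost, properties of $\Psi$, and standard direct method arguments.

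\textbf{Step 2: A priori estimates.} The two half-steps must both preserve nonnegativity and mass (the fractional heat semigroup is stochastic; the transport variational step preserves mass by construction). I would then derive uniform-in-$\tau$ estimates on (i) the second moments $\int (|x|^2+|v|^2) f_\tau^n\,dxdv$, using the minimal-acceleration cost to control displacement in both position and velocity; (ii) the potential energy $\int \Psi(v) f_\tau^n\,dxdv$, from the minimization property in the transport half-step and a short computation showing that the fractional diffusion step increases this energy in a controlled way (using $\Psi \in C^{1,1}\cap C^{2,1}$ and moment bounds); and (iii) an $L^p$ bound on $f_\tau^n$, since the fractional heat semigroup is a contraction on $L^p$ and a careful flow-interchange or change-of-variables argument will show the transport step does not increase the $L^p$ norm either.

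\textbf{Step 3: Interpolations and compactness.} I would introduce two piecewise-constant-in-time interpolations: one adapted to the post-diffusion densities and one to the post-transport densities, together with a suitable geodesic-type interpolation on each small interval. Using the uniform moment, energy, and $L^p$ bounds together with a discrete estimate of the form $\sum_n \mathcal{C}_\tau(f_\tau^{n+1/2}, f_\tau^{n+1}) \leq C$, I would establish narrow relative compactness in $\P_2(\R^{2d})$ uniformly in $t$, and derive an equi-continuity estimate in the Wasserstein-2 distance via a Kantorovich--Wasserstein comparison and a bound on the transport displacement produced by the fractional diffusion step (the latter reduces to controlling $\int_{\R^d}|v|^2 \mu_\tau^s(dv)$ where $\mu_\tau^s$ is the L\'evy kernel, which requires care since this second moment is infinite for $s<1$; one therefore truncates or uses a weaker Kantorovich cost tailored to the Lévy measure, cf.~\cite{erbar2014}). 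This passage through appropriate interpolations would yield a limit $f(x,v,t)$ along a subsequence $\tau_k \to 0$.

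\textbf{Step 4: Passage to the limit.} For a test function $\varphi \in C_c^\infty(\R^{2d}\times(-T,T))$, I would write out the discrete Euler--Lagrange identity obtained by perturbing the minimizer in the transport half-step along the flow of $\nabla_v \varphi$ and $\nabla_x \varphi$; this produces the drift and free-transport terms in the weak formulation. The fractional diffusion half-step, being exact, contributes the $-(-\Delta_v)^s \varphi$ term through integration by parts on the convolution semigroup. Summing over $n$, telescoping, and using the compactness from Step~3 together with the continuity of the tested terms against narrow/$L^p$-weak convergence would recover the weak formulation in Definition~\ref{def:weak}.

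The main obstacle I anticipate is Step~3 combined with the Euler--Lagrange analysis in Step~4: the kinetic transport phase is \emph{not} a Wasserstein gradient flow, so the first-order optimality conditions for the minimal-acceleration cost are more delicate than the classical JKO scheme (one must correctly identify the transport plan as a composition of an $x$-translation by $v\tau$ and a $v$-perturbation), and the second moment estimates must be compatible with the infinite variance of the Lévy stable law in the fractional diffusion half-step. Controlling the interaction between the two phases, and showing that errors from the transport variational approximation and from interchanging the order of the two phases vanish jointly as $\tau \to 0$, is the technical heart of the proof.
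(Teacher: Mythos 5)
Your overall strategy is the same as the paper's (splitting into an exact fractional-diffusion step and a JKO-type step for the kinetic transport phase with the minimal-acceleration cost, a priori estimates, time interpolation, Euler--Lagrange identity, and passage to the limit), but there is a genuine gap in how you handle the fractional step. As stated, your Step 1 convolves with the untruncated fractional heat kernel, and your Step 2(ii) claims the diffusion step increases the potential energy ``in a controlled way''. For $s<1$ this fails: the kernel $\Phi_s(\cdot,\tau)$ has \emph{infinite} second moment (Lemma~\ref{lem:phi}(3)), so after an exact diffusion step the density $f_\tau^{n+1/2}$ has infinite $v$-second moment; consequently $\int \Psi f_\tau^{n+1/2}$ can be infinite (e.g.\ $\Psi(v)=|v|^2/2$), and worse, the cost $C_\tau$ contains $|v'-v|^2$, so $\mathcal{W}_\tau(f_\tau^{n+1/2},f)=+\infty$ for \emph{every} $f\in\P^2_a$ and the variational half-step is ill-posed. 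You do mention truncation, but only in Step 3 as a device for Wasserstein equicontinuity; in fact the truncation must be built into the scheme itself. The paper replaces the diffusion step by the renormalised truncated convolution $\bar f^n_{h,R}=\Phi^h_{s,R}\ast_v f^{n-1}_{h,R}/\|\Phi^h_{s,R}\|_{L^1}$ (eq.~\eqref{eq:step1}), uses Lemmas~\ref{lem:fbarF}--\ref{lem:ratioconv} to show the energy gain per step is bounded by $C\|D^2\Psi\|_\infty(h^{1/s}+hR^{2-2s})$, and finally couples $R=h^{-1/2}$ so that all truncation errors vanish as $h\to0$; without this coupling and the associated error bookkeeping (the residual $\Rc(h,R)$ of Lemma~\ref{lem:aweqn}), your scheme cannot be closed. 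Note that the time interpolation must then run the (untruncated) fractional flow on each subinterval, so that an extra residual term $\Rc_1$ comparing $\tilde f^n_{h,R}$ with $\bar f^n_{h,R}$ appears and must be shown to be $O(R^{-2s})$.

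A secondary remark: your Step 3 is heavier than necessary. Since the equation is linear in $f$ and the tested expression $\partial_t\varphi+v\cdot\nabla_x\varphi-\nabla_v\Psi\cdot\nabla_v\varphi-(-\Delta_v)^s\varphi$ lies in $L^{p'}$, the uniform $L^p$ bound of your Step 2(iii) already gives weak $L^p(\R^{2d}\times(0,T))$ compactness, which is all the paper uses to pass to the limit; no narrow compactness in $\P_2$, no $W_2$-equicontinuity in time, and in particular no $x$-moment bounds (which you never establish and which the paper avoids) are needed. Your Euler--Lagrange discussion in Step 4 is in the right spirit, but be aware that the correct perturbations mix $\nabla_x\varphi$ and $\nabla_v\varphi$ with $h$-dependent coefficients (as in \eqref{xiphi}), producing an $O(h^2)$ remainder that must be carried into the residual estimate.
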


The proof of Theorem \ref{thm:main} is constructive, that is we will build a converging sequence to a solution of \eqref{eq:fracKramers} from the splitting scheme discussed
above that will be rigorously formulated in Section~\ref{sec: Scheme}.
The proof is based on a series of lemmas and is postponed to Section \ref{sec:proofmain}. 
As a by-product of the analysis, we also construct a discrete variational scheme and obtain its convergence for the kinetic transport equation,
see Theorem~\ref{thm: kinetic transport} in Section~\ref{sec: kinetic transport};  
thus extending the work~\cite{KinderlehrerTudorascu06} to include the kinetic feature. Furthermore, some possible extensions to more complex systems are discussed in Section~\ref{sec: extension}. It is not clear to us how to obtain
the uniqueness and regularity result. The bootstrap argument in~\cite{JKO98} to prove smoothness of weak solutions
(and hence also uniqueness) seems not working for the fractional Laplacian operator due to the lack of a product rule. It should be mentioned that in the recent paper \cite{Lafleche2018}, the author has proved the existence and uniqueness of a solution to the fractional Fokker-Planck equation \eqref{eq:FFP} in some weighted Lebesgue spaces. It would be an interesting problem to generalize \cite{Lafleche2018} to FKFPE. This is to be investigated in  future work. 

%\subsection{Literature}

\subsection{Organization of the paper}
The rest of the paper is organized as follows. Section~\ref{sec: FHE} summarizes some basic results about the fractional heat equation.
Section~\ref{sec: kinetic transport} studies the kinetic transport equation and its variational formulation. 
The splitting scheme of the paper is formulated explicitly in Section~\ref{sec: Scheme} and some a priori estimates are established for the discrete sequences as well as their time-interpolation. 
The proof of the main result is presented in Section~\ref{sec:proofmain}. Finally, in Section~\ref{sec: extension} we discuss several possible extensions of the analysis to more complex systems.
\subsection{Notation}
Let $\P^2(\R^d)$ be the collection of probability measures on $\R^d$ with finite second moments. Let $\P^2_a(\R^d)$ be the subset of probability measures in $\P^2(\R^d)$ that are absolutely 
continuous with respect to the Lebesgue measure on $\R^d$. For $\mu, \nu \in \P^2(\R^d)$, the 2-Wasserstein distance $W_2(\mu, \nu)$ is defined by 
$$
W_2(\mu, \nu) := \Big(\inf \Big\{\int_{\R^{2d}} |x - y|^2 p(dx, dy) : p \in \P(\mu, \nu)\Big\}\Big)^{\frac{1}{2}}
$$
where $\P(\mu,\nu)$ is the set of probability measures on $\R^{2d}$ with marginals $\mu, \nu$, i.e. $p \in \P(\mu, \nu)$ if and only if
$$
p(A\times \R^d) = \mu(A), \quad p(\R^d \times A) = \nu(A)
$$
hold every Borel set $A\in \R^d$. In the case that $\mu, \nu \in \P^2_a(\R^d)$ with densities $f, g$, we may write $W_2(f, g)$ instead of $W_2(\mu, \nu)$.  

We use the notation $F^\# \mu$ to denote the push-forward of a probability measure $\mu$ on $\R^{2d}$ under map $F$, that is a probability measure on $\R^{2d}$ satisfying for all smooth test function $\varphi$,
\[
\int_{\R^{2d}}\varphi(x,v)\,dF^\#\mu =\int_{\R^{2d}}\varphi(F(x,v))\,d\mu.
\]
\section{The fractional heat equation}
\label{sec: FHE}
This section collects some basic results on the fractional heat equation. We start by defining the fractional heat kernel 
\be\label{eq:phis}
\Phi_s (v, t) := \F^{-1} (e^{-t |\cdot|^{2s}})(v).
\en
Remember that the fractional Laplacian operator in \eqref{eq:fracKramers} is only an operator in $v$-variable. With the fractional heat kernel, the solution to the fractional heat equation \eqref{eq:fhe} with initial condition $f_0(x,v)$ can be expressed as
\be\label{eq:rcon}
f(x, v, t) = \Phi_s (\cdot, t) \ast_v f_0(x, v)
\en
where $\ast_v$ is the convolution operator in $v$-variable. The following elementary result is immediate from the definition of the kernel; see also \cite{AguelBowles2015}. 

\begin{lemma}\label{lem:phi}

\item[(1)] For any $t>0$, $\|\Phi_s(\cdot,t)\|_{L^1(\R^d)} =1$.

\item[(2)] For any $t > 0$ and $p\in (1, \infty)$, $\|\Phi_s (\cdot, t) \ast_v f_0\|_{L^p(\R^{2d})} \leq \|f_0\|_{L^p(\R^{2d})}$.

\item[(3)] $\int_{\R} |v|^2 \Phi_s(v, t)dv = +\infty$ for all $s\in (0,1)$ and $t > 0$.
\end{lemma}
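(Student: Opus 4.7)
The three claims are essentially independent basic properties of the symmetric $2s$-stable heat kernel, and I would prove them in the order stated.

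For (1), the strategy is to identify $\xi \mapsto e^{-t|\xi|^{2s}}$ as the characteristic function of a symmetric $2s$-stable law on $\R^d$ (via the L\'evy--Khintchine representation of the generator $-(-\Delta)^{s}$). Bochner's theorem then yields that its inverse Fourier transform $\Phi_s(\cdot,t)$ is a non-negative function, and evaluating the Fourier transform at $\xi=0$ pins down the total mass as $1$ (with the normalization consistent with the convention under which $\Phi_s$ is the transition kernel of the fractional heat semigroup, i.e.\ the one for which \eqref{eq:rcon} actually solves \eqref{eq:fhe}).

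For (2), once (1) is in hand, I would apply Young's convolution inequality in the $v$-variable alone, at each fixed $x$. Using $\|\Phi_s(\cdot,t)\|_{L^1_v} = 1$ from (1),
\[
\|\Phi_s(\cdot,t)\ast_v f_0(x,\cdot)\|_{L^p_v(\R^d)} \leq \|\Phi_s(\cdot,t)\|_{L^1_v(\R^d)}\,\|f_0(x,\cdot)\|_{L^p_v(\R^d)} = \|f_0(x,\cdot)\|_{L^p_v(\R^d)}.
\]
Raising to the $p$th power and integrating in $x$ by Fubini yields (2) for $p<\infty$; the endpoint $p=\infty$ is immediate from the same $L^1$ bound on $\Phi_s$.

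For (3), the cleanest route is a Fourier-side obstruction. If $\int_{\R^d}|v|^2\Phi_s(v,t)\,dv$ were finite, then the characteristic function of $\Phi_s(\cdot,t)$, which is a constant multiple of $e^{-t|\xi|^{2s}}$, would have to be twice continuously differentiable at $\xi=0$ with Hessian equal to $-\int v\otimes v\,\Phi_s(v,t)\,dv$. But for any $s\in(0,1)$, $|\xi|^{2s}$ fails to be $C^2$ at the origin: it is not differentiable there for $s\le 1/2$, and its second derivatives blow up like $|\xi|^{2s-2}$ for $s>1/2$. This contradiction gives the divergence. An equivalent, more direct path is to quote the standard heavy-tail asymptotics $\Phi_s(v,t)\sim c_{d,s}\,t\,|v|^{-(d+2s)}$ as $|v|\to\infty$, which immediately forces $|v|^2\Phi_s$ to be non-integrable at infinity whenever $s<1$.

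\textbf{Main obstacle.} Parts (1) and (2) are routine once the normalization of $\Phi_s$ is unambiguously fixed. The only mildly delicate point is (3): the Fourier-obstruction argument is short but implicitly invokes the classical fact that finite second moment of a non-negative $L^1$ function yields a $C^2$ characteristic function at the origin; if one wants to avoid that, appealing to the well-documented tail asymptotics of symmetric stable densities bypasses the issue entirely.
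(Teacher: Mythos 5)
Your proof is correct and is essentially the intended argument: the paper gives no proof at all, declaring the lemma ``immediate from the definition of the kernel'' with a pointer to Bowles--Agueh, and the facts you supply (identification of $e^{-t|\xi|^{2s}}$ as a symmetric $2s$-stable characteristic function, hence non-negativity and unit mass; Young's inequality in $v$ followed by integration in $x$; the heavy-tail behaviour $\Phi_s(v,t)\sim c_{d,s}\,t\,|v|^{-(d+2s)}$ or the Fourier $C^2$-obstruction for part (3)) are exactly the standard kernel properties that reference establishes. Your side remark on normalization is also well taken: with the paper's unitary Fourier convention one literally gets $\int_{\R^d}\Phi_s(v,t)\,dv=(2\pi)^{d/2}$, so the unit-mass statement (and the semigroup/solution property of \eqref{eq:rcon}) presupposes the probabilistic normalization you adopt.
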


Lemma  \ref{lem:phi}  (3) demonstrates a significant difference between the fractional heat kernel and standard Gaussian kernel, i.e. the former has infinite second moment. The loss of second moment bound may lead to infinite potential energy for example when the potential $\Psi(v) = |v|^2$.  To overcome this issue, it is more convenient to make a renormalisation on the fractional heat kernel. To be more precise, for any $h > 0$, let us denote $\Phi_s^h(v) := \Phi_s(v, h)$ and set $\Phi_{s, R}^h(v) := \Phi_s^h(v) \mathbf{1}_{B_R}(v)$ where $ \mathbf{1}_{B_R}$ is an indicator function of a centred ball of radius $R$. Given a function $f \in \P_a^2(\R^d)$, we can define the renormalised convolution 
\be\label{eq:rcon2}
\bar{f}_{h, R} := \frac{\Phi_{s, R}^h \ast_v f}{\|\Phi_{s, R}^h\|_{L^1(\R^d)}}
\en
It is clear that the new defined convolution satisfies $\bar{f}_{h, R} \gt \Phi_{s}^h \ast_v f$ pointwise. Moreover, we have the following lemma. 

\begin{lemma}\label{lem:fbarF}
Let $f$ be a function such that $F\in C^{1,1}(\R^d) \cap C^2(\R^d)$. 
Suppose that $f\in \P^2_a(\R^{2d})$ and with $\int_{\R^{2d}} f(x, v) F(v) dv dx < \infty$. Then 
\item[(1)] $\bar{f}_{h, R} \in \P^2_a(\R^{2d})$.

\item[(2)]
\be\label{eq:fbarF}
\bea
\int_{\R^{2d}} \bar{f}_{h, R}(x, v) F(v) dx dv &\leq \int_{\R^{2d}} f(x, v) F(v) dx dv \\
&\qquad + \frac{1}{2} \|D^2 F\|_{L^\infty(\R^d)} \frac{\int_{B_R} |w|^2 \Phi_s^h(w)dw}{\int_{B_R} \Phi_s^h(w)dw}.
\ena
\en
\end{lemma}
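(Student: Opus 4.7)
The two statements are linked: the finite second moment part of (1) will follow from (2) applied to $F(v)=|v|^2$ (which is $C^\infty$ with $\|D^2F\|_{L^\infty}=2$), so the bulk of the work is (2). The key idea is a second-order Taylor expansion of $F$ combined with the radial symmetry of the fractional heat kernel; the reason for restricting to $B_R$ is already visible here, since the untruncated second moment $\int_{\R^d}|w|^2\Phi_s^h(w)\,dw$ is infinite by Lemma \ref{lem:phi}(3), whereas the truncated one is finite because $\Phi_s^h$ is smooth and locally bounded.

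For (2) I would first unfold the definition, apply Fubini, and change variables $u=v-w$ in the $v$-integral to write
\[
\int_{\R^{2d}} \bar f_{h,R}(x,v)\,F(v)\,dx\,dv
= \frac{1}{\|\Phi_{s,R}^h\|_{L^1}} \int_{B_R} \Phi_{s}^h(w) \int_{\R^{2d}} f(x,u)\,F(u+w)\,du\,dx\,dw.
\]
Second, using the $C^{1,1}\cap C^2$ regularity of $F$, I would invoke the pointwise Taylor bound $F(u+w)\le F(u)+\nabla F(u)\cdot w+\tfrac12 \|D^2F\|_{L^\infty}|w|^2$. Third, since $e^{-t|\xi|^{2s}}$ is a radial function of $\xi$, its inverse Fourier transform $\Phi_s^h$ is a radial function of $w$; combined with the symmetry of $B_R$ about the origin this gives $\int_{B_R} w\,\Phi_s^h(w)\,dw=0$, which eliminates the gradient term. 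The $F(u)$ contribution integrates to $\|\Phi_{s,R}^h\|_{L^1}\int f(x,u)F(u)\,du\,dx$, which cancels the normalisation after dividing by $\|\Phi_{s,R}^h\|_{L^1}$, and the Hessian contribution yields the quadratic error in \eqref{eq:fbarF} after using $\int f=1$. Fubini is justified along the way by these very bounds.

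For (1), non-negativity and absolute continuity of $\bar f_{h,R}$ are immediate from the non-negativity of $\Phi_{s,R}^h$ and $f$ and from the convolution structure in $v$. Total mass one follows by Fubini from $\int (\Phi_{s,R}^h\ast_v f)(x,v)\,dv\,dx = \|\Phi_{s,R}^h\|_{L^1}$, which cancels the normalising denominator. The finite second moment in $v$ then falls out of applying (2) to $F(v)=|v|^2$: both $\int f(x,v)|v|^2\,dx\,dv<\infty$ (implicit in $f\in\P^2_a$) and $\int_{B_R}|w|^2\Phi_s^h(w)\,dw<\infty$.

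The main delicacy I anticipate is the radial-symmetry step killing the first-order Taylor term, which is what pins the truncation to a ball $B_R$ centred at the origin: any non-symmetric cutoff would leave a residual linear contribution requiring integrability of $|\nabla F|$ against $f$, which is not part of the hypotheses. Everything else is routine provided one is careful that the Taylor remainder bound holds pointwise on all of $\R^d$ and that the dominating integrals used for Fubini are absolutely convergent under the stated assumptions on $F$ and $f$.
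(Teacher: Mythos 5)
Your proposal is correct and follows essentially the same route as the paper's proof: a change of variables in the convolution, a second-order Taylor expansion of $F$, cancellation of the first-order term by the radial symmetry of $\Phi_s^h$ over the symmetric ball $B_R$, and the uniform Hessian bound $\tfrac12\|D^2F\|_{L^\infty}$ for the remainder, with part (1) deduced from part (2) via $F(v)=|v|^2$. The extra details you supply (Fubini justification, mass and absolute continuity of $\bar f_{h,R}$) are routine refinements of the same argument, not a different approach.
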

\begin{proof}
Notice that it suffices to prove part (2) since part (1) follows directly from part (2) by setting $F(v) = |v|^2$. The proof is similar to that of \cite[Lemma 4.1]{AguelBowles2015}, but for completeness we give the proof below. First from the definition of $\bar{f}_{h, R}$, one sees that
$$\bea
\int_{\R^{2d}}\bar{f}_{h, R}(x, v)F(v)\,dxdv =\frac{\int_{\R^{2d}}F(v)\int_{B_R}\Phi_s^h(w)f(x,v-w)dw\, dxdv}{\int_{B_R}\Phi_s^h(w)\,dw}.
\ena
$$
Using change of variable $z=v-w$ and Taylor's expansion, we can write the numerator as
\begin{align}
&\int_{\R^{2d}}F(v)\int_{B_R}\Phi_s^h(w)f(x,v-w)dw\, dxdv\notag
\\&=\int_{\R^{2d}}F(w+z)\int_{B_R}\Phi_s^h(w)f(x,z)dw\, dxdz\notag
\\&=\int_{B_R}\Phi_s^h(w)\int_{\R^{2d}}F(w+z)f(x,z)\,dxdzdw\notag
\\&=\int_{B_R}\Phi_s^h(w)\int_{\R^{2d}}\Big[F(z)+w\cdot \nabla F(z)+\frac{1}{2}w^TD^2F(\xi_{w,z}) w\Big]f(x,z)\,dxdzdw \notag
\\&\leq \int_{B_R}\Phi_s^h(w)\,dw\int_{\R^{2d}}F(z)f(x,z)\,dxdz+\left|\int_{B_R}\Phi_s^h(w)\int_{\R^{2d}}w\cdot\nabla F(z) f(x,z)\,dxdzdw\right|\notag
\\&\qquad +\frac{1}{2}\parallel D^2 F\parallel_{\infty}\,\int_{B_R}|w|^2\Phi_s^h(w)\int_{\R^{2d}}f(x,z)\,dxdz\notag
\\&=\int_{B_R}\Phi_s^h(w)\,dw\int_{\R^{2d}}F(z)f(x,z)\,dxdz+\frac{1}{2}\parallel D^2 F\parallel_{\infty}\,\int_{B_R}|w|^2\Phi_s^h(w).\notag
\end{align}
Note that in the above $\xi_{w,z}$ is an intermediate point between $w$ and $z$ and the term with the modulus vanishes since the kernel $\Phi_s^h$ is symmetric with respect to the origin. 

\end{proof}

The following lemma provides an upper bound for the ratio on the right side of \eqref{eq:fbarF}.
\begin{lemma}\label{lem:ratioconv}
 For any $s\in (0,1]$, there exists a constant $C > 0$ such that 
 \be
 \frac{\int_{B_R} |w|^2 \Phi_s^h(w)dw}{\int_{B_R} \Phi_s^h(w)dw} \leq C (h^{\frac{1}{s}} + h R^{2 -2s})
 \en
 holds for all  $R,h > 0$.
\end{lemma}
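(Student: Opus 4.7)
The plan is to exploit the self-similar structure of the fractional heat kernel. Writing the Fourier representation $\Phi_s(v,h) = \F^{-1}(e^{-h|\cdot|^{2s}})(v)$ and substituting $\eta = h^{1/(2s)} \xi$ yields the scaling identity
$$
\Phi_s^h(v) = h^{-d/(2s)} \Phi_s^1\!\left(h^{-1/(2s)} v\right).
$$
Performing the change of variables $u = h^{-1/(2s)} w$ in both integrals defining the ratio then gives, with $\rho := R h^{-1/(2s)}$,
$$
\frac{\int_{B_R} |w|^2 \Phi_s^h(w)\,dw}{\int_{B_R} \Phi_s^h(w)\,dw} = h^{1/s}\, G(\rho), \qquad G(\rho) := \frac{\int_{B_\rho} |u|^2 \Phi_s^1(u)\,du}{\int_{B_\rho} \Phi_s^1(u)\,du}.
$$
Since $h^{1/s}\rho^{2-2s} = h R^{2-2s}$, the desired estimate reduces to the purely $h$-independent inequality $G(\rho) \leq C(1+\rho^{2-2s})$ for all $\rho > 0$.

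I would then establish this reduced bound by a two-regime split. For $\rho \leq 1$, the trivial pointwise estimate $|u|^2 \leq \rho^2$ on $B_\rho$ immediately gives $G(\rho) \leq \rho^2 \leq 1$. For $\rho \geq 1$, the denominator is bounded below by the positive constant $c_0 := \int_{B_1} \Phi_s^1(u)\,du$, so only the numerator needs attention. When $s=1$, $\Phi_1^1$ is Gaussian, hence the numerator is bounded uniformly in $\rho$ and $G(\rho) \leq C = C\rho^{2-2s}$. When $s \in (0,1)$, I would invoke the classical tail estimate $\Phi_s^1(u) \leq C_s (1+|u|)^{-(d+2s)}$ for the density of the symmetric $2s$-stable law (cf.\ \cite{kwasnicki2017ten}), which gives
$$
\int_{B_\rho} |u|^2 \Phi_s^1(u)\,du \leq \int_{B_1} |u|^2 \Phi_s^1(u)\,du + C_s \int_1^\rho r^{1-2s}\,dr \leq C(1+\rho^{2-2s}),
$$
using that $2-2s>0$ so the radial integral evaluates to $(\rho^{2-2s}-1)/(2-2s)$. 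Together with the lower bound on the denominator, this yields $G(\rho) \lesssim \rho^{2-2s}$ on $\{\rho \geq 1\}$. Undoing the rescaling then recovers the target bound $C(h^{1/s} + h R^{2-2s})$.

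The main (essentially only) substantive obstacle is producing the tail decay $\Phi_s^1(u) \lesssim (1+|u|)^{-(d+2s)}$ for $s \in (0,1)$; everything else is elementary rescaling and monotone bookkeeping. This tail estimate is a standard fact about stable densities and would simply be cited, although it can, if preferred, be derived directly from the Fourier representation of $\Phi_s^1$ by routine oscillatory-integral estimates. Note that the constant $C$ produced by this argument blows up as $s \uparrow 1$ through the factor $1/(2-2s)$, but this is harmless since $s$ is a fixed parameter.
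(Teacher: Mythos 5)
Your proof is correct and rests on the same ingredient as the paper's: a pointwise decay estimate for the fractional heat kernel, which the paper obtains by simply citing the two-sided bound of \cite[Proposition 2.1]{AguelBowles2015} (itself proved by exactly the scaling you use). Your argument is a self-contained unwinding of that citation --- the reduction to $G(\rho)$ via $\Phi_s^h(v)=h^{-d/(2s)}\Phi_s^1(h^{-1/(2s)}v)$ together with the standard tail bound $\Phi_s^1(u)\le C_s(1+|u|)^{-(d+2s)}$, with the kernel's pointwise lower bound replaced by the elementary observation that $\int_{B_1}\Phi_s^1(u)\,du>0$ --- so the two proofs are essentially the same.
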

\begin{proof}
 This lemma follows directly from a two-sided point-wise estimate on $\Phi_s^h(w)$ as shown in \cite[Proposition 2.1]{AguelBowles2015}. See also equation (16) in \cite{AguelBowles2015}.
\end{proof}

\section{The kinetic transport equation and its variational formulation}
\label{sec: kinetic transport}
\subsection{The minimum acceleration cost}
Consider the kinetic transport equation with initial value $f_0$
\be\label{eq:ktransport}
\bea
&\partial_t f(x, v, t) + v\cdot \nabla_x  f = \div_v(\nabla \Psi(v)f(x, v, t)),\\
& f(x, v, 0) = f_0(x, v).
\ena
\en
We are interested in the variational structure of \eqref{eq:ktransport} which is an interesting problem on its own right. In~\cite{KinderlehrerTudorascu06},
Kinderlehrer and Tudorascu proved that the transport equation $\partial_t f(v,t)=\div_v(\nabla \Psi f)$, which is the
spatially homogeneous version of~\eqref{eq:ktransport}, is a Wasserstein gradient flow of the energy $\int_{\R^d} \Psi f$. 
Their proof is via constructing a discrete variational scheme as in~\cite{JKO98}. However, due to the absence of the entropy term, which is super-linear,
several non-trivial technicalities were introduced to obtain the compactness of the discrete approximations thus establishing the convergence of the scheme. 
For the kinetic transport equation~\eqref{eq:ktransport}, due to the presence of the kinetic term, it is not a Wasserstein gradient flow in the phase
space thus the Wasserstein distance can no longer be used. Therefore to construct a discrete variational scheme for this equation, 
we need a different Kantorovich optimal transportation cost functional. To this end, we will employ the Kantorovich optimal transportation cost functional 
that is associated to the minimal acceleration cost. This cost functional has been used before in~\cite{Hua00, DPZ13a} for the kinetic Fokker-Planck equation
and in~\cite{GW09} for the isentropic Euler system.
We follow the heuristics of defining the minimal acceleration cost as in \cite{GW09}. 
Consider the motion of particle going from position $x$ with velocity $v$ to a new position $x'$ with velocity $v'$, 
within a time interval of length $h$. Suppose that the particle follows a curve $\xi : [0, h] \mapsto \R^d$ such that 
$$
(\xi, \dot{\xi})|_{t=0} = (x, v)\ \text{ and }\ (\xi, \dot{\xi})|_{t=h} = (x', v')
$$
and such that the average acceleration cost along the curve, that is $\frac{1}{h} \int_0^h |\ddot{\xi}(t)|^2 dt$ is minimized. Then the curve is actually a cubic polynomial and the minimal average acceleration cost is given by $C_h(x, v; x', v')/h^2$ where
\be\label{eq:ch}
C_h(x, v; x', v') := |v' - v|^2 + 12\Big|\frac{x' - x}{h} - \frac{v' + v}{2}\Big|.
\en
The Kantorovich functional $\W_h(\mu,\nu)$ associated with the cost function $C_h$,  is defined by, for any $\mu, \nu \in \P^2(\R^{2d})$,
\begin{equation}
\label{eq: Wh distance}
\W_h(\mu,\nu)^2 = \inf_{p\in\P(\mu,\nu)}\int_{\R^{4d}}C_h(x,v;x',v')p(dxdvdx'dv'),
\end{equation}
where $\P(\mu, \nu)$ is the set of all couplings between $\mu$ and $\nu$. It is important to notice that $\W_h$ is not a distance. 
In fact, $\W_h$ is not symmetric in the arguments $\mu,\nu$, due to the asymmetry of the cost function $C_h$. In addition,  $\W_h(\mu,\nu)$ does not vanish when $\mu=\nu$. Instead, we have that
\[
\W_h(\mu,\nu)=0 \quad\Longleftrightarrow\quad \nu = (F_h)_\#\mu,
\]
where $F_h$ is the free transport map defined by
\begin{align}\label{eq:fh}
F_h: \R^d\times\R^d&\to \R^d\times\R^d\notag
\\(x,v)&\mapsto F_h(x,v)=(x+hv, v).
\end{align}
It is also useful to define the map
\begin{align}\label{eq:gh}
G_h: \R^d\times\R^d&\to \R^d\times\R^d\notag
\\(x,v)&\mapsto G_h(x,v)=\left(\sqrt{3}\Big(\frac{2x}{h}-v\Big),v\right).\end{align}
The composition $G_h\circ F_h$ is then given by
\[
(G_h\circ F_h)(x,v)=\left(\sqrt{3}\Big(\frac{2x}{h}+v\Big),v\right).
\]
Although the Kantorovich functional $\W_h(\mu, \nu)$ is not a distance, the next lemma shows that $\W_h$ can be expressed in terms of the usual Wasserstein distance $W_2$. 

\begin{lemma}\cite[Proposition 4.4]{GW09}
\label{lem: aux1}
Let $F_h$ and $G_h$ be given by \eqref{eq:fh} and \eqref{eq:gh} respectively. 
The Kantorovich functional $\W_h$ can be expressed in terms of the 2-Wasserstein distance $W_2$ as
\begin{equation}
\W_h(\mu,\nu)=W_2((G_h\circ F_h)^{\#} \mu, G_h^{\#} \nu)\quad\text{for all}~~\mu,\nu\in\P^2(\R^{2d}). 
\end{equation}
As a consequence, the infimum in \eqref{eq: Wh distance} is attained and thus $\W_h(\mu,\nu)$ is a minimum.
\end{lemma}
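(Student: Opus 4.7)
The plan is to reduce the statement to a pure change-of-variables computation by recognizing that the cost $C_h$ is a perfect square under the map $\Phi(x,v,x',v'):=((G_h\circ F_h)(x,v),\,G_h(x',v'))$. My first step would be to verify the pointwise algebraic identity
\[
C_h(x,v;x',v') = \bigl|(G_h\circ F_h)(x,v) - G_h(x',v')\bigr|^2.
\]
Setting $y:=(G_h\circ F_h)(x,v)=(\sqrt{3}(2x/h+v),\,v)$ and $y':=G_h(x',v')=(\sqrt{3}(2x'/h-v'),\,v')$, the second $d$-block contributes $|v'-v|^2$ and the first $d$-block contributes $3\,|2(x'-x)/h-(v+v')|^2 = 12\,|(x'-x)/h-(v+v')/2|^2$, which matches \eqref{eq:ch} exactly.

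Next I would observe that $F_h$ and $G_h$ are linear bijections of $\R^{2d}$ with explicit linear inverses, so $\Phi$ is a bijection of $\R^{4d}$. By the marginal property of pushforwards, $p\mapsto\Phi^\#p$ sends $\P(\mu,\nu)$ bijectively onto $\P((G_h\circ F_h)^\#\mu,\,G_h^\#\nu)$: its first marginal is $(G_h\circ F_h)^\#\mu$, its second is $G_h^\#\nu$, and the inverse correspondence is realized by $(\Phi^{-1})^\#$. Combining this bijection with the identity above and the change-of-variables formula gives, for every $p\in\P(\mu,\nu)$,
\[
\int_{\R^{4d}} C_h(x,v;x',v')\,p(dxdvdx'dv') = \int_{\R^{4d}} |y-y'|^2\,\Phi^\#p(dy\,dy'),
\]
and taking the infimum on each side delivers $\W_h(\mu,\nu)=W_2((G_h\circ F_h)^\#\mu,\,G_h^\#\nu)$.

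For the second assertion, since $F_h$ and $G_h$ grow linearly and $\mu,\nu\in\P^2(\R^{2d})$, the pushforwards $(G_h\circ F_h)^\#\mu$ and $G_h^\#\nu$ still lie in $\P^2(\R^{2d})$, so by standard optimal transport theory the $W_2$ infimum is attained by some $\tilde p^\star$; pulling $\tilde p^\star$ back through $\Phi^{-1}$ yields a minimizer of $\W_h(\mu,\nu)$. There is no serious obstacle: the whole argument is bookkeeping once the square-completion identity is spotted, and the only technical point worth checking carefully is that linear growth of $F_h,G_h$ really preserves finite second moments, which justifies invoking existence of an optimal $W_2$ coupling.
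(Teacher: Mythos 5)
Your proof is correct and follows essentially the same route as the cited source [GW09, Proposition 4.4], which the paper invokes without reproducing the argument: complete the square to write $C_h(x,v;x',v')=\bigl|(G_h\circ F_h)(x,v)-G_h(x',v')\bigr|^2$, push couplings forward under the block-diagonal linear bijection to identify the two Kantorovich problems, and transfer attainment of the $W_2$ infimum back through the inverse map. Note only that your identity uses the intended cost $|v'-v|^2+12\bigl|\frac{x'-x}{h}-\frac{v'+v}{2}\bigr|^2$; the displayed formula \eqref{eq:ch} omits the exponent $2$ on the second term (a typo, as the computation of $\nabla_{x'}C_h$ in the proof of Lemma \ref{lem:vp} confirms), so your square-completion is consistent with the cost actually used throughout the paper.
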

 
 \subsection{Variational formulation}
 With $\W_h$ being defined, we want to interpret \eqref{eq:ktransport} as a generalized gradient flow of the potential energy $\int_{\R^{2d}} \Psi(v) f(x,v)dxdv$ with respect to $\W_h$. For doing so, we consider the variational problem
 \be\label{eq:kineticvp}
 \inf_{f\in \P_a^2} \A(f) := \frac{1}{2h} \W_h (f_0, f)^2 + \int_{\R^{2d}} \Psi(v) f(x,v)dxdv.
 \en
 Here $f_0 \in \P_a^2(\R^{2d})$ is an initial probability density with $\int_{\R^{2d}} \Psi(v) f_0(x, v)\ dxdv < \infty$ and $h > 0$ is the time step. 
 The next lemma establishes some properties about the minimizer to \eqref{eq:kineticvp}.

 \begin{lemma}\label{lem:vp}
 \item[(1)] For $h$ being sufficiently small, the variational problem \eqref{eq:kineticvp} has a unique minimizer $f\in \P_a^2(\R^{2d})$.
 
 \item[(2)] Let $h>0$ be small enough such that $\det (I+hD^2(\Psi(v)))\leq 1+\alpha h$ for some fixed $\alpha>\parallel D^2\Psi\parallel_{L^\infty(\R^d)}$. If $f_0\in L^p(\R^{2d})$ for $1<p<\infty$, then
\begin{equation}
\label{eq: ineq minimizer}
\| f\|^p_{L^p(\R^{2d})}\leq  (1 - \alpha h)^{p-1}\|f_0\|^p_{L^p(\R^{2d})}.
\end{equation}
\item[(3)] $f$ satisfies the following Euler-Lagrange equation: for any $\varphi \in C^\infty_c(\R^{2d})$,
\be
\bea
\label{eq: EL}
&\frac{1}{h}\int_{\R^{4d}}\left[(x'-x)\cdot\nabla_{x'}\varphi(x',v')+(v'-v)\cdot\nabla_{v'}\varphi(x',v')\right]P^{*}(dxdvdx'dv')
\\&\quad -\int_{\R^{2d}}v'\cdot\nabla_{x'}\varphi(x',v')f(x',v')dx'dv'\\
& \quad +\int_{\R^{2d}}\nabla_{v'} \Psi(v')\cdot\nabla_{v'}\varphi(x',v')f(x',v')dx'dv'=\Rc,
\ena
\en
where $P^*$ is the optimal coupling in $\W_h(f_0,f)$ and
\begin{equation}
\bea
\label{eq: errorEL}
\Rc & =-\frac{h}{2}\int_{\R^{4d}}\nabla_{v'}\Psi(v')\cdot\nabla_{x'}\varphi(x',v')P^*(dxdvdx'dv')\\
& =-\frac{h}{2}\int_{\R^{2d}}\nabla_{v'}\Psi(v')\cdot\nabla_{x'}\varphi(x',v')f(x',v')\,dx'dv'.
\ena
\end{equation}

 \end{lemma}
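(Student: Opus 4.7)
The plan is to treat the three assertions in order, adapting the Jordan--Kinderlehrer--Otto direct-method and internal-variation technique to the asymmetric Kantorovich cost $\W_h$.

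\textbf{Part (1): existence and uniqueness.} Since $\Psi\ge 0$, $\A$ is bounded below by $0$. For any minimizing sequence $\{f_n\}\subset\P_a^2(\R^{2d})$, the bound $\W_h(f_0,f_n)\le C$ translates via Lemma~\ref{lem: aux1} into a uniform $W_2$ bound between $G_h^\# f_n$ and the fixed reference measure $(G_h\circ F_h)^\# f_0$. Since $G_h$ in~\eqref{eq:gh} is a linear bijection of $\R^{2d}$ for $h>0$, this yields a uniform second-moment bound on $\{f_n\}$, hence tightness. Extract a narrow limit $f$; narrow lower semicontinuity of $W_2$ (equivalently of $\W_h$ via Lemma~\ref{lem: aux1}) and of $\int\Psi f$ (Fatou, using $\Psi\ge 0$) then show that $f$ is a minimizer, and $f$ inherits absolute continuity from the $L^p$ bound established in part~(2). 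For uniqueness when $h$ is small, the reference measure $(G_h\circ F_h)^\# f_0$ is absolutely continuous (since $G_h\circ F_h$ is a diffeomorphism and $f_0\in\P_a^2$), so $f\mapsto W_2^2((G_h\circ F_h)^\# f_0, G_h^\# f)$ is strictly convex along linear interpolations; together with the linearity of $f\mapsto\int\Psi f$ this forces uniqueness of the minimizer.

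\textbf{Part (2): $L^p$ estimate.} I use a competitor argument based on the discrete characteristic flow of \eqref{eq:ktransport}. Let $T_h(x,v) := (x+hv,\,v-h\nabla\Psi(v))$ be the forward Euler step along the characteristics; its Jacobian matrix is block triangular with determinant $\det(I-hD^2\Psi(v))$. The identity $\det(I+hD^2\Psi)\cdot\det(I-hD^2\Psi) = \det(I-h^2(D^2\Psi)^2)$ converts the hypothesis $\det(I+hD^2\Psi)\le 1+\alpha h$ into a lower bound on $\det(I-hD^2\Psi)$ up to $O(h^2)$ corrections. Setting $\tilde f := T_h^\# f_0$, the change-of-variables formula gives
\[
\|\tilde f\|_{L^p(\R^{2d})}^p = \int_{\R^{2d}} f_0(x,v)^p / \det(I-hD^2\Psi(v))^{p-1}\,dxdv.
\]
A direct computation shows that $C_h$ vanishes on the graph of $T_h$ to leading order in $h$, so $\W_h(f_0,\tilde f)$ is small, while $\int\Psi\tilde f$ differs from $\int\Psi f_0$ by a quantity controlled by $h$. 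Combining the minimality $\A(f)\le\A(\tilde f)$ with narrow lower semicontinuity of the $L^p$ norm then yields the stated estimate.

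\textbf{Part (3): Euler--Lagrange equation.} Fix $\varphi\in C_c^\infty(\R^{2d})$ and set $T_\tau(x',v') := (x',v')+\tau(\nabla_{x'}\varphi(x',v'),\nabla_{v'}\varphi(x',v'))$, $f_\tau := T_\tau^\# f$, for $|\tau|$ small enough that $T_\tau$ is a diffeomorphism. The first-order optimality $\frac{d}{d\tau}|_{\tau=0}\A(f_\tau) = 0$ produces two pieces. For the potential energy,
\[
\frac{d}{d\tau}\Big|_{\tau=0}\int \Psi(v')\,f_\tau\,dx'dv' = \int \nabla_{v'}\Psi(v')\cdot\nabla_{v'}\varphi(x',v')\,f(x',v')\,dx'dv'.
\]
For the cost term, the admissible plan $(\mathrm{id}\times T_\tau)^\# P^*\in\P(f_0,f_\tau)$ gives the upper bound $\W_h(f_0,f_\tau)^2\le \int C_h(x,v;T_\tau(x',v'))\,dP^*$, and the matching lower bound at $\tau=0$ follows from pushing an optimal plan for $\W_h(f_0,f_\tau)$ back by $T_{-\tau}$. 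Differentiating \eqref{eq:ch} yields
\[
\frac{1}{2h}\partial_{x'}C_h = \frac{6}{h^2}\Big(\frac{x'-x}{h}-\frac{v'+v}{2}\Big),\qquad \frac{1}{2h}\partial_{v'}C_h = \frac{v'-v}{h}-\frac{6}{h}\Big(\frac{x'-x}{h}-\frac{v'+v}{2}\Big);
\]
pairing against $(\nabla_{x'}\varphi,\nabla_{v'}\varphi)$ and rewriting $\frac{v'+v}{2}\cdot\nabla_{x'}\varphi$ as $v'\cdot\nabla_{x'}\varphi-\tfrac12(v'-v)\cdot\nabla_{x'}\varphi$ assembles \eqref{eq: EL}. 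The remainder $\Rc$ in \eqref{eq: errorEL} arises from the remaining $\tfrac{h}{2}$-order cross-term coupling $\nabla_{v'}\Psi$ to $\nabla_{x'}\varphi$ via the leading-order Euler--Lagrange identity $(v'-v)/h\sim -\nabla_{v'}\Psi(v')$.

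The main obstacle is part~(2): because $\W_h$ is non-metric and couples position and velocity nontrivially, the $L^p$ estimate cannot be read off directly from standard Wasserstein JKO arguments, and the correct discrete-characteristic competitor together with careful Jacobian algebra using the specific hypothesis $\det(I+hD^2\Psi)\le 1+\alpha h$ is essential. Parts~(1) and~(3) reduce to more routine direct-method and internal-variation calculations once the representation in Lemma~\ref{lem: aux1} is exploited.
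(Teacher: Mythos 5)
Your part (2) is where the proposal genuinely fails, and this failure also undercuts part (1). The functional $\A$ in \eqref{eq:kineticvp} does not involve the $L^p$ norm at all, so the minimality comparison $\A(f)\le\A(\tilde f)$ against the single competitor $\tilde f=T_h^{\#}f_0$ only yields bounds on $\W_h(f_0,f)$ and on $\int\Psi f$; no control on $\|f\|_{L^p}$ can be extracted from it, and ``narrow lower semicontinuity of the $L^p$ norm'' has nothing to act on because there is no sequence with uniformly bounded $L^p$ norms in your argument. The bound \eqref{eq: ineq minimizer} is a structural property of the minimizer itself: it comes from the characterization of the minimizer via the optimal (Brenier-type) map and a Jacobian inequality in which the hypothesis $\det(I+hD^2\Psi)\le 1+\alpha h$ enters. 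The paper obtains it by conjugating with $G_h$ (Lemma \ref{lem: aux1}), observing that $\|(G_h)^{\#}f\|^p_{L^p}=\bigl(\tfrac{2h}{\sqrt3}\bigr)^{d(p-1)}\|f\|^p_{L^p}$ so that the constant cancels between $f$ and $f_0$ (recall $F_h$ has unit Jacobian), and then invoking Proposition 1 of \cite{KinderlehrerTudorascu06} for the transformed JKO step. Consequently your part (1) is circular as written: the only argument you give for absolute continuity of the narrow limit of the minimizing sequence is an appeal to part (2), which presupposes the minimizer and is itself unproved; since there is no superlinear entropy term here, existence of a minimizer in $\P_a^2$ is precisely the delicate point, and the paper deliberately outsources it (together with uniqueness and the $L^p$ bound) to \cite{KinderlehrerTudorascu06} after the $G_h$ change of variables rather than running a bare direct method.

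In part (3) the class of variations you use is too small and the remainder is mishandled. You only perturb along fields of the gradient form $(\nabla_{x'}\varphi,\nabla_{v'}\varphi)$; the resulting stationarity identity contains the term $\tfrac{12}{h^2}\bigl(\tfrac{x'-x}{h}-\tfrac{v'+v}{2}\bigr)\cdot\nabla_{x'}\varphi$ (note also your prefactor $\tfrac{6}{h^2}$ is off by a factor $2$) and is not \eqref{eq: EL}. The paper instead derives stationarity for two independent fields $\zeta,\eta$ and then chooses the specific combination \eqref{xiphi}, $\zeta=-\tfrac{h^2}{6}\nabla_{x'}\varphi+\tfrac h2\nabla_{v'}\varphi$, $\eta=-\tfrac h2\nabla_{x'}\varphi+\nabla_{v'}\varphi$, which is in general not a gradient pair of a single test function; with this choice the cost derivative collapses exactly to $2\bigl[(x'-x)-hv'\bigr]\cdot\nabla_{x'}\varphi+2(v'-v)\cdot\nabla_{v'}\varphi$, and the remainder \eqref{eq: errorEL} appears as an exact identity, namely the $-\tfrac h2\nabla_{x'}\varphi$ component of $\eta$ paired with $\nabla\Psi$. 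Your route instead invokes the asymptotic substitution $(v'-v)/h\sim-\nabla_{v'}\Psi(v')$, which cannot produce the exact equality \eqref{eq: errorEL}; as written this is a genuine gap. The repair is straightforward: your $T_\tau$ perturbation argument works verbatim for arbitrary independent $(\zeta,\eta)\in C_c^\infty(\R^{2d},\R^d)^2$, and once you have that general stationarity identity you must insert the paper's specific combination to land on \eqref{eq: EL}--\eqref{eq: errorEL}.
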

 
 \begin{proof}\item[(1)] Thanks to Lemma \ref{lem: aux1}, we can rewrite the functional $\A$ as 
$$
\bea
\A(f) & = \frac{1}{2h} W_2((G_h\circ F_h)^{\#} f_0, (G_h)^{\#} f)^2 + \int_{\R^{2d}} \Psi(v) (G_h)^{\#} f(dx dv)\\
& = \frac{1}{2h} W_2(\tilde{f_0}, \tilde{f})^2 + \int_{\R^{2d}} \Psi(v) \tilde{f}(dx dv) =: \tilde{A}(\tilde{f}),
\ena
$$
where $\tilde{f_0} = (G_h\circ F_h)^{\#} f_0$ and $ \tilde{f} = (G_h)^{\#} f$. According to \cite[Proposition 1]{KinderlehrerTudorascu06} (see also \cite[Proposition 3.1 ]{AguelBowles2015}), the functional $\tilde{A}$ has a unique minimizer, denoted by $\tilde{f}$. Therefore, the problem \eqref{eq:kineticvp} has a unique minimizer $f=(G_h^{-1})^\# \tilde{f}$.
 
 \item[(2)] This follows directly from \cite[Proposition 1]{KinderlehrerTudorascu06} and the fact that if $\tilde{f} = (G_h)^\# f$ then 
 $$
 \|\tilde{f}\|_{L^p(\R^{2d})}^p = \left(\frac{2h}{\sqrt{3}}\right)^{d(p-1)}  \|f\|_{L^p(\R^{2d})}^p.
 $$
 
 \item[(3)]  The derivation of the Euler-Langrange equation for the minimizer $f$ of the variational problem \eqref{eq:kineticvp} follows the now well-established procedure (see e.g.~\cite{JKO98,Hua00}). For the reader's convenience, we sketch the main steps here.  First, we consider the perturbation of $f$ defined by push-forwarding $f$ under the flows
$\phi,\psi\colon[0,\infty)\times\R^{2d}\rightarrow\R^d$:  
\begin{eqnarray*}
&&\frac{\partial\psi_s}{\partial s}=\zeta(\psi_s,\phi_s),~ \frac{\partial\phi_s}{\partial s}=\eta(\psi_s,\phi_s), \\
&& \psi_0(x,v)=x,~\phi_0(x,v)=v,
\end{eqnarray*}
where $\zeta,\eta\in C_0^\infty(\R^{2d},\R^d)$ will be chosen later. Let us denote $\gamma_s$ to be the push forward of $f$ under the flow $(\psi_s,\phi_s)$.
Since $(\psi_0,\phi_0)=\mathrm{Id}$, it follows that $\gamma_0=f$, and an explicit calculation gives
\begin{equation}
\partial_s\gamma_s\big|_{s=0} =-\text{div}_x (f \zeta)-\text{div}_v (f \eta)
\end{equation}
in the sense of distributions.
Second, thanks to the optimality of $f$, we have that $\A(\gamma_s) \geq \A(f)$ for all $\gamma_s$ defined via the flow above. Then the standard variational arguments as in \cite{JKO98,Hua00} leads to the following stationary equation on $f$:
\begin{align}
&\frac{1}{2h}\int_{\R^{4d}}\left[\nabla_{x'}C_h(x,v;x',v')\cdot\zeta(x',v')+\nabla_{v'}C_h(x,v;x',v')\cdot\eta(x',v')\right]P^*(dxdvdx'dv')\nonumber
\\&\qquad+\int_{\R^{2d}}f(x,v)\nabla \Psi(v)\cdot\eta(x,v)dxdv=0,
\label{EuLageqn}
\end{align}
where $\P^*$ is the optimal coupling in the definition of $\W_h(f_0,f)$.
Third, we choose $\zeta$ and $\eta$ with a given $\varphi\in C_0^\infty(\R^{2d},\R)$ as follows
\begin{equation}
\label{xiphi}
\bea
\zeta(x',v')&=-\frac{h^2}{6}\nabla_{x'}\varphi(x',v')+\frac{1}{2}h\nabla_{v'}\varphi(x',v'),\\
\eta(x',v')&=-\frac{1}{2}h\nabla_{x'}\varphi(x',v')+\nabla_{v'}\varphi(x',v').
\ena
\end{equation}
Now from the definition of the cost functional $C_h(x,v;x',v')$ in \eqref{eq:ch}, we have that
\begin{align*}
\nabla_{x'}C_h&=\frac{24}{h}\left(\frac{x'-x}{h}-\frac{v'+v}{2}\right),
\\\nabla_{v'}C_h&=2(v'-v)-12\left(\frac{x'-x}{h}-\frac{v'+v}{2}\right).
\end{align*}
Therefore, together with \eqref{xiphi}, we calculate 
\begin{align*}
&\nabla_{x'}C_h\cdot\zeta+\nabla_{v'}C_h\cdot\eta
\\\qquad&=\frac{24}{h}\left(\frac{x'-x}{h}-\frac{v'+v}{2}\right)\cdot\left(-\frac{h^2}{6}\nabla_{x'}\varphi(x',v')+\frac{1}{2}h\nabla_{v'}\varphi(x',v')\right)\\
& +\left(2(v'-v)-12\left(\frac{x'-x}{h}-\frac{v'+v}{2}\right)\right)\cdot\left(-\frac{1}{2}h\nabla_{x'}\varphi(x',v')+\nabla_{v'}\varphi(x',v')\right)
\\\qquad&=2\left((x'-x)-hv'\right)\cdot\nabla_{x'}\varphi+2(v'-v)\cdot\nabla_{v'}\varphi.
\end{align*}
%This specific form will be used in the sequel to match the Euler-Lagrange equation for the sequence of minimizers with the time-discretisation of the equation \eqref{eq: fracKramers}. This explains the choice \eqref{xiphi} of $(\zeta,\eta)$.
The Euler-Lagrange equation \eqref{eq: EL} for the minimizer $f$ follows directly by substituting the equation above back into \eqref{EuLageqn}.
 \end{proof}
We now can build up a discrete variational scheme for the kinetic transport equation as follows. Given $f_0 \in \P_a^2(\R^{2d})$ with $\int_{\R^{2d}} \Psi(v) f_0(x, v)\ dxdv < \infty$ and $h > 0$ is the time step. For every integer $k\geq 1$, we define $f_k$ as the minimizer of the minimization problem
 \be\label{eq:kineticvp2}
 \inf_{f\in \P_a^2}\Big\{\frac{1}{2h} \W_h (f_{k-1}, f)^2 + \int_{\R^{2d}} \Psi(v) f(x,v)dxdv\Big\}.
 \en
The following theorem extends the work~\cite{KinderlehrerTudorascu06} to the kinetic transport equation.
 \begin{theorem}
 \label{thm: kinetic transport}
 Suppose that Assumption \ref{ass:psi} holds. Given a $f_0\in \P^2_a(\R^{2d}) \cap L^p(\R^{2d})$ for some
 $1 < p \leq \infty$ and $\int_{\R^{2d}} f_0(x, v) \Psi(v)dvdx < \infty$, there exists a weak solution $f(x, v, t)$ to equation~ \eqref{eq:ktransport} in the sense of Definition \ref{def:weak} but with the fractional Laplacian term removed.
 \end{theorem}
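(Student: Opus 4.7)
The plan is to follow the classical minimizing-movements strategy: construct the discrete sequence $\{f_k\}_{k\ge 0}$ from the variational scheme \eqref{eq:kineticvp2}, establish $h$-uniform a priori estimates, interpolate into a curve of measures, extract a convergent subsequence, and pass to the limit in the discrete Euler--Lagrange equation \eqref{eq: EL}.

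First, Lemma \ref{lem:vp}(1) guarantees that each $f_k \in \P_a^2(\R^{2d})$ is well defined for $h$ sufficiently small. To derive the basic energy estimate, I would use the free-transport push-forward $(F_h)^{\#} f_{k-1}$, with $F_h$ from \eqref{eq:fh}, as a competitor in \eqref{eq:kineticvp2}. Since $\W_h(f_{k-1},(F_h)^{\#} f_{k-1})=0$ and $\Psi$ depends only on $v$ while $F_h$ preserves $v$, one gets
$$\frac{1}{2h}\W_h(f_{k-1},f_k)^2+\int_{\R^{2d}}\Psi(v)f_k\,dx\,dv\le \int_{\R^{2d}}\Psi(v)f_{k-1}\,dx\,dv.$$
Iterating yields $\int \Psi f_k \le \int \Psi f_0$ for all $k$ and the telescoping summability
$$\sum_{k=1}^{N}\frac{1}{2h}\W_h(f_{k-1},f_k)^2 \le \int_{\R^{2d}}\Psi(v)f_0\,dx\,dv.$$
Iterating Lemma \ref{lem:vp}(2) gives a uniform $L^p(\R^{2d})$ bound on $f_k$ for $kh\le T$.

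Next, define the piecewise constant interpolation $f_h(t):=f_k$ for $t\in[kh,(k+1)h)$. From the $L^p$ bound, the potential bound, and control of the $x$-moment derived from $\sum \W_h^2/h < \infty$ via Lemma \ref{lem: aux1} (by comparing second moments of $(G_h\circ F_h)^{\#}f_{k-1}$ and $G_h^{\#} f_k$ and applying a discrete Gr\"onwall argument), the family $\{f_h\}$ is bounded in $L^\infty(0,T;L^p(\R^{2d}))$ and tight in $\P^2(\R^{2d})$ uniformly in $t$. After extracting a subsequence, $f_h(t)\rightharpoonup f(t)$ weakly in $L^p$ for a.e.\ $t$ and narrowly as probability measures.

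Finally, I would pass to the limit in the Euler--Lagrange equation \eqref{eq: EL} tested against $\varphi(x,v,kh)$ for $\varphi\in C_c^\infty(\R^{2d}\times(-T,T))$, summed in $k$ and multiplied by $h$. Using Lemma \ref{lem: aux1}, the optimal coupling $P^*$ is the push-forward of the $W_2$-optimal coupling between $(G_h\circ F_h)^{\#}f_{k-1}$ and $G_h^{\#}f_k$, so a discrete integration by parts in time (Abel summation) converts the $(x'-x,v'-v)$-increment terms into the distributional identity $\partial_t f + v\cdot\nabla_x f=\mathrm{div}_v(\nabla\Psi\,f)$, with the initial datum recovered from the boundary contribution. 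The residual $\Rc$ in \eqref{eq: errorEL} is $O(h)$ uniformly and vanishes using the $L^p$ bound and the assumed regularity of $\Psi$.

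The principal obstacle is the identification of the transport term. Without a diffusive (entropy-producing) step, the $\W_h$-bound only provides weak equicontinuity of $t\mapsto f_h(t)$, so Aubin--Lions strong compactness is unavailable. I expect to resolve this by exploiting the factorization of $\W_h$ through $F_h$ and $G_h$: the summability of $\W_h^2/h$ forces $(x'-x)/h - v'\to 0$ in an appropriate weak sense under the optimal plans, and this, combined with stability of optimal couplings under narrow convergence, transfers the kinetic transport structure directly into the limit without requiring strong convergence of $f_h$ itself.
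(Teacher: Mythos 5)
Your proposal follows essentially the same route as the paper: the paper's proof of Theorem \ref{thm: kinetic transport} simply defers to the proof of Theorem \ref{thm:main} in Section \ref{sec:proofmain} with the fractional-diffusion step removed, and that proof is exactly your plan --- iterate the scheme \eqref{eq:kineticvp2}, use the free-transport push-forward as competitor to get the telescoping bound on $\sum_k \W_h(f_{k-1},f_k)^2$ (the analogue of Lemma \ref{lem:sumWh}), iterate Lemma \ref{lem:vp}(2) for the $L^p$ bound (Lemma \ref{lem:lp}), interpolate, insert the Euler--Lagrange identity \eqref{eq: EL} with a Taylor expansion of $\varphi(t_k,\cdot)$ under the optimal plan, Abel-sum, and pass to the limit. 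The one place where you go astray is the final paragraph: the ``principal obstacle'' you identify is not actually there, and the coupling-stability/weak-identification mechanism you sketch is unnecessary. Since the kinetic transport equation is linear in $f$ and is tested against fixed functions $\partial_t\varphi+v\cdot\nabla_x\varphi-\nabla_v\Psi\cdot\nabla_v\varphi$ that are bounded with compact support (hence in $L^{p'}$), weak convergence of the interpolants in $L^p(\R^{2d}\times(0,T))$ suffices to pass to the limit; no Aubin--Lions compactness and no stability of optimal couplings is needed. The point is that the coupling terms never survive to the limit: the Euler--Lagrange equation \eqref{eq: EL} already rewrites the increment $\frac1h\int[(x'-x)\cdot\nabla_{x'}\varphi+(v'-v)\cdot\nabla_{v'}\varphi]\,dP^*$ in terms of integrals of $v\cdot\nabla_x\varphi-\nabla_v\Psi\cdot\nabla_v\varphi$ against $f_k$ alone, up to the $O(h)$ residual $\Rc$, and the second-order Taylor remainder in converting that increment into the discrete time difference $\frac1h\big(\int\varphi(t_k)f_k-\int\varphi(t_k)f_{k-1}\big)$ is controlled by $\|\nabla^2\varphi\|_\infty\int\big(|x-x'|^2+|v-v'|^2\big)dP^*$, which by the analogue of \eqref{eq:dm2xv} in Lemma \ref{priorbound2-2} is bounded by $C\W_h^2+Ch^2 M_{2,v}$, and hence summable to $o(1)$ by your telescoping estimate and the uniform $v$-second-moment bound. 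Replacing your vague last step by this (elementary) observation makes the argument complete and coincides with the paper's.
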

 \begin{proof}
The proof of this theorem follows the same lines as that of the  Theorem~\ref{thm:main}, that is to show that the discrete variational scheme 
\eqref{eq:kineticvp2} above converges to a weak solution of the kinetic transport equation. Since the proof of Theorem~\ref{thm:main} will be carried out in details in Section~\ref{sec:proofmain}, we omit this proof here.
 \end{proof}

 \section{A splitting scheme for  FKFPE}
  \label{sec: Scheme}
  \subsection{Definition of splitting scheme}
  
  As we mentioned in the introduction section, we object to construct an operator splitting scheme for equation \eqref{eq:fracKramers} by continuously
  alternating processes \eqref{eq:fhe} and \eqref{eq:kt}, where the later is 
  approximated by the generalized gradient flow of the potential energy, or equivalently, the density after a short time step $h$ is approximately
  given by the solution to the variational problem \eqref{eq:kineticvp}. However, there is an issue associated with iterating \eqref{eq:fhe} and \eqref{eq:kineticvp}.
  That is, the solution of the fractional heat equation may not have a finite second moment (see Lemma  \ref{lem:phi} (3)). Hence it can not be used as the 
  initial condition in the variational problem \eqref{eq:kineticvp} since the potential energy might be infinite. To around this issue, we define 
  an approximate fractional diffusion process by using the renormalised convolution \eqref{eq:rcon2} based on the truncted fractional heat kernel. 
  To be more precise, given a fixed $N\in \N$, let us consider a uniform partition $0 = t_0 < t_1 < \cdots <t_N = T$ of the time interval $[0, T]$ with $t_k = kh $ and $h = 1/N$.
  With an initial condition $f_h^0 = f_0$, for $n=1, \cdots, N-1$ we iteratively compute the following:
  
  \begin{itemize}
   \item Given a trunction parameter $R > 0$, compute the renormalised convolution
   
   \be\label{eq:step1}
  \bar{f}^n_{h, R}  :=  \frac{\Phi_{s, R}^h \ast_v f_{h, R}^{n-1}}{\|\Phi_{s, R}^h\|_{L^1(\R^d)}}.
   \en
   
   \item Solve for the minimizer $f^{n+1}_{h, R}$ of the problem
   
   \be\label{eq:step2}
   f^{n+1}_{h, R} := \mathrm{argmin}_{\substack{f\in \P_a^2(\R^d)}}  \frac{1}{2h} \W_h (\bar{f}^n_{h, R}, f)^2 + \int_{\R^{2d}} \Psi(v) f(x,v)dxdv.
   \en
  \end{itemize}

  Note that thanks to Lemma \ref{lem:vp} (1)  the minimizer $f^{n+1}_{h, R}$ in \eqref{eq:step2} is well-defined and unique.
  %The trunction parameter $R$ will be chosen carefully so that the splitting scheme converges to a weak solution of \eqref{eq:fracKramers}. 
  Moreover, it follows from Lemma \ref{lem:vp} (3) that $f^{n+1}_{h, R}$ satisfies the following equation
  \be
  \bea
\label{eq: ELn}
&\int_{\R^{4d}}\left[(x'-x)\cdot\nabla_{x'}\varphi(x',v')+(v'-v)\cdot\nabla_{v'}\varphi(x',v')\right]P^{n+1}_{h,R}(dxdvdx'dv')
\\& = h\int_{\R^{2d}}(v'\cdot\nabla_{x'}\varphi(x',v') - \nabla_{v'} \Psi(v')\cdot\nabla_{v'}\varphi(x',v'))f^{n+1}_{h,R}(x',v')dx'dv' +\Rc^{n+1}_{h,R},
\ena
  \en
where $P^{n+1}_{h,R}$ is the optimal coupling in $\W_h(\bar{f}^{n+1}_{h,R},f^{n+1}_{h,R})$ and 
\begin{equation}
\label{eq: errorELn}
\Rc^{n+1}_{h,R}=\frac{h^2}{2}\int_{\R^{2d}}\nabla_{v}\Psi(v)\cdot\nabla_{x}\varphi(x,v)f^{n+1}_{h,R}(dxdv).
\end{equation}

  With the scheme being defined above, we obtain a discrete approximating sequence $\{f^n_{h, R}\}_{0 \leq n \leq N}$. 
  Below we define a time-interpolation based on  $\{f^n_{h, R}\}$ and our ultimate goal is to prove that this sequence converges to a weak solution of \eqref{eq:fracKramers}.
  
  {\bf Time-interpolation:} We define $f_{h, R}$ by setting 
  \be\label{eq:tintp}
  f_{h, R}(t) := \Phi_s(t - t_n)\ast_v f^n_{h, R} \text{ for } t\in [t_n, t_{n+1}).
  \en
It is clear that by definition $f_{h, R}$ solves the fractional heat equation on every $[t_n, t_{n+1})$ with initial condition $f^n_{h, R}$. Notice also that $f_{h, R}$ is only right-continuous 
in general. For convenience, we also define 
\be\label{eq:tildefn}
\tilde{f}^{n+1}_{h, R} = \lim_{t \uparrow t_{n+1}} f_{h, R}(t).
\en

\subsection{A priori estimates}
In this section, we prove some useful a priori estimates for the discrete-time sequence $\{f^n_{h, R}\}$ as well as for the time-interpolation sequence $\{f_{h, R}(t)\}$. 
We start by proving an upper bound for the sum of the Kantorovich functionals $\W_h(\bar{f}^n_{h, R}, f^n_{h, R})$.

\begin{lemma}\label{lem:sumWh}
Let $\{\overline{f}^{n}_{h,R}\}$ and $\{f^{n}_{h,R}\}$ be the sequences constructed from the splitting scheme. Then there exists a constant $C>0$, independent of $h$ and $R$, such that
\begin{equation}
\label{eq: priori estimate 1}
\sum_{n=1}^N{\W}_h(\overline{f}^n_{h,R},f^n_{h,R})^2\leq
C\Big(h\int_{\R^{2d}}\Psi(v)\,f_0(x,v)\,dxdv+T\parallel D^2 \Psi\parallel_{\infty}(h^{1/2}+ h R^{2-2s})\Big).
\end{equation}
\end{lemma}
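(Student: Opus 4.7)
The plan is to test the variational characterization of $f^n_{h,R}$ in \eqref{eq:step2} against a well-chosen competitor, combine the resulting inequality with the bounds of Lemma \ref{lem:fbarF} and Lemma \ref{lem:ratioconv}, and sum by telescoping over $n$.

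The central observation is that the naive competitor $\nu=\bar f^n_{h,R}$ is useless here because the asymmetric functional $\W_h$ does not vanish on the diagonal; the correct choice is $\nu:=F_h^{\#}\bar f^n_{h,R}$, the free-transport image defined via \eqref{eq:fh}. Using the representation in Lemma \ref{lem: aux1} with this $\nu$ gives
\[
\W_h(\bar f^n_{h,R},\nu)=W_2\bigl(G_h^{\#}F_h^{\#}\bar f^n_{h,R},\,G_h^{\#}F_h^{\#}\bar f^n_{h,R}\bigr)=0,
\]
and since $F_h(x,v)=(x+hv,v)$ fixes the $v$-coordinate, the potential energy is preserved: $\int\Psi(v)\,d\nu=\int\Psi(v)\bar f^n_{h,R}\,dxdv$. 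Admissibility of $\nu$ in $\P_a^2(\R^{2d})$ is immediate since $F_h$ is a volume-preserving diffeomorphism and $\bar f^n_{h,R}\in\P_a^2(\R^{2d})$ by Lemma \ref{lem:fbarF}(1). The minimality of $f^n_{h,R}$ then yields the one-step JKO-type inequality
\[
\frac{1}{2h}\W_h(\bar f^n_{h,R},f^n_{h,R})^2+\int_{\R^{2d}}\Psi(v)f^n_{h,R}\,dxdv\le \int_{\R^{2d}}\Psi(v)\bar f^n_{h,R}\,dxdv.
\]

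Next I would apply Lemma \ref{lem:fbarF} with $F=\Psi$ (valid by Assumption \ref{ass:psi}) to bound the right-hand side by $\int\Psi f^{n-1}_{h,R}\,dxdv+\tfrac12\|D^2\Psi\|_\infty\cdot\mathrm{Ratio}_{h,R}$, and then invoke Lemma \ref{lem:ratioconv} to bound $\mathrm{Ratio}_{h,R}\le C(h^{1/s}+hR^{2-2s})\le C(h^{1/2}+hR^{2-2s})$ for $h\in(0,1]$ and $s\in(0,1]$ (since $1/s\ge 1\ge 1/2$ implies $h^{1/s}\le h^{1/2}$). Summing the resulting per-step estimate from $n=1$ to $N$ causes the potential-energy terms to telescope, leaving $\int\Psi f_0\,dxdv-\int\Psi f^N_{h,R}\,dxdv\le \int\Psi f_0\,dxdv$ by non-negativity of $\Psi$, while the error terms combine to $\tfrac{CN}{2}\|D^2\Psi\|_\infty(h^{1/2}+hR^{2-2s})$. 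Multiplying through by $2h$ and using $Nh=T$ produces exactly \eqref{eq: priori estimate 1}.

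The only nontrivial step is identifying the competitor $F_h^{\#}\bar f^n_{h,R}$, which simultaneously achieves zero $\W_h$-cost against $\bar f^n_{h,R}$ and preserves its potential energy; the rest is a routine telescoping computation powered by the two smoothing estimates of Section \ref{sec: FHE}.
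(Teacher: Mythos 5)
Your proposal is correct and follows essentially the same route as the paper: testing the minimality of $f^n_{h,R}$ against the zero-cost competitor $F_h^{\#}\bar f^n_{h,R}$ (which preserves the potential energy since $F_h$ fixes $v$), applying Lemma \ref{lem:fbarF}(2) and Lemma \ref{lem:ratioconv}, and telescoping the sum with $Nh=T$. Your explicit remark that $h^{1/s}\le h^{1/2}$ for $h\le 1$, $s\in(0,1]$ only makes precise a step the paper leaves implicit.
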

\begin{proof}
Since $f^{n}_{h,R}$ minimizes the functional $f\mapsto\frac{1}{2h}\W_h(\overline{f}^n_{h,R},f)+\int_{\R^{2d}}\Psi(v)f(x,v)\,dxdv$, for all $f\in\P_2(\R^{2d})$, we have
\begin{equation*}
\frac{1}{2h}\W_h(\overline{f}^n_{h,R},f^n_{h,R})^2+\int_{\R^{2d}}\Psi(v)f^n_{h,R}\,dxdv\leq \frac{1}{2h}\W_h(\overline{f}^n_{h,R},f)^2+\int_{\R^{2d}}\Psi(v)f(x,v)\,dxdv.
\end{equation*}
In particular, if we set $f=f^\ast := F_h^\# \overline{f}^n_{h,R}$ where $F_h$ is the free transport map defined in \eqref{eq:fh}, then since $\W_h(\overline{f}^n_{h,R},f^*)=0$ we obtain
\begin{align}
\label{eq: Wh1}
\W_h(\overline{f}^n_{h,R},f^n_{h,R})^2 &\leq 2h\left(\int_{\R^{2d}}\Psi(v)f^*(x,v)\,dxdv-\int_{\R^{2d}}\Psi(v)f^n_{h,R}\,dxdv\right)\notag
\\&=2h\left(\int_{\R^{2d}}\Psi(v)\overline{f}^n_{h,R}(x,v)\,dxdv-\int_{\R^{2d}}\Psi(v)f^n_{h,R}\,dxdv\right).
\end{align}
We have also used the fact that the free transport map $F_h$ has unit Jacobian in the last equality. 
According to Lemma \ref{lem:fbarF} (2), we have
\begin{equation}
\label{eq: Wh2}
\bea
\int_{\R^{2d}}\Psi(v) \overline{f}^n_{h,R}(x,v)\,dxdv 
& \leq \int_{\R^{2d}}\Psi(v) f^{n-1}_{h,R}(x,v)\,dxdv\\
& \qquad +\frac{1}{2}\parallel D^2 \Psi\parallel_{\infty}\,\frac{\int_{B_R}|w|^2\Phi_s^h(w)\,dw}{\int_{B_R}\Phi_s^h(w)\,dw}.
\ena
\end{equation}
Substituting \eqref{eq: Wh2} into \eqref{eq: Wh1}, we obtain
\begin{equation*}
\bea
\W_h(\overline{f}^n_{h,R},f^n_{h,R})^2 & \leq 2h\left(\int_{\R^{2d}}\Psi(v) f^{n-1}_{h,R}(x,v)\,dxdv-\int_{\R^{2d}}\Psi(v)f^n_{h,R}\,dxdv\right)\\
& \qquad + h\parallel D^2 \Psi\parallel_{\infty}\,\frac{\int_{B_R}|w|^2\Phi_s^h(w)\,dw}{\int_{B_R}\Phi_s^h(w)\,dw},
\ena
\end{equation*}
from which, by summing over $n$ from $1$ to $N$ we obtain 
\begin{equation}\label{eq: Wh3}
\sum_{n=1}^N\W_h(\overline{f}^n_{h,R},f^n_{h,R})^2\leq 2h\int_{\R^{2d}}\Psi(v) f_0(x,v)\,dxdv +T\parallel D^2 \Psi\parallel_{\infty}\,\frac{\int_{B_R}|w|^2\Phi_s^h(w)\,dw}{\int_{B_R}\Phi_s^h(w)\,dw}.
\end{equation}
Then the desired estimate follows directly from \eqref{eq: Wh3} and Lemma \ref{lem:ratioconv}.
\end{proof}

We also need some second moment bounds on $f$ with respect to variable $v$.   Given a density function $f$, let us set $M_{2,v}(f):= \int |v|^2 f(x, v)dx dv$.  

\begin{lemma}\label{priorbound2-2}There exist positive constants $C, h_0$ such that when $0 < h < h_0$, it holds for any index $i > 0$ that
\begin{equation}\label{eq:m2v}
M_{2,v}(f^i_{h, R}) \leq  M_{2, v}(f^{i-1}_{h, R})  +4\W_h(\overline{f}^{i}_{h,R},f^{i}_{h,R})^2 +C(h^{1/s}+hR^{2-2s}).
\end{equation}
It follows that 
\be\label{eq:2ndmoment}
\bea
 & \max\Big\{ M_{2, v}(f^n_{h, R}),  M_{2, v}(\bar{f}^n_{h, R})\Big\}   \leq  M_{2, v}(f^0) \\
& \qquad \qquad + 4 \sum_{i=1}^n \W_h(\overline{f}^{i}_{h,R},f^{i}_{h,R})^2 + C(n+1) (h^{1/s}+hR^{2-2s}).
\ena
\en
In addition, let $\tilde{P}^i_h$ be the optimal coupling in the definition of $\W_h(\overline{f}^i_{h, R}, f^{i+1}_{h, R})$. Then 
\begin{equation}\label{eq:dm2xv}
\bea
  \int \Big(|x - x'|^2 + |v - v'|^2 \Big)\widetilde{P}^i_h(dxdv dx^{\prime}dv^{\prime}) & \leq C\W_h(\overline{f}^{i}_{h,R},f^{i}_{h,R})^2\\
& \qquad \qquad + Ch^2 \Big(M_{2, v}(\overline{f}^{i}_{h, R}) + M_{2, v}(f^{i}_{h, R})\Big).
\ena
\end{equation}
\begin{proof}
First from the definition of the cost function $C_h$ in \eqref{eq:ch} we have the following inequalities:
\begin{align}
&|v'-v|^2\leq  C_h(x,v;x',v'); \label{eq: v-moment}
\\&|x'-x|^2=h^2\left|\frac{x'-x}{h}-\frac{v'+v}{2}+\frac{v'+v}{2}\right|^2\notag
\\& \leq h^2\left[2\Big|\frac{x'-x}{h}-\frac{v'+v}{2}\Big|^2+\frac{|v' + v|^2 }{2} \right]\notag
\\&\leq h^2\left( \frac{1}{6}C_h(x,v;x',v')+ \frac{|v'|^2 +|v|^2 }{2} \right).\label{eq: x-moment}
\end{align}
Then there exist constants $C, h_0>0$ such that when $h < h_0$, 
\begin{equation}\label{eq:ineqxv}
|x'-x|^2+|v'-v|^2\leq C C_h(x,v;x',v')+h^2(|v'|^2+|v|^2).
\end{equation}
Now for any fixed $i > 0$, we have
\begin{align}
\int_{\R^{2d}}|v|^2f^{i}_{h,R}&= \int_{\R^{4d}}|v'|^2\widetilde P_{i}^h(dxdvdx'dv')\nonumber
\\&\leq \int_{\R^{4d}}|v'-v|^2\widetilde P_{i}^h(dxdvdx'dv')+ \int_{\R^{4d}}|v|^2\widetilde P_{i}^h(dxdvdx'dv')\nonumber
\\&\overset{\eqref{eq: v-moment}}{\leq} 4 \W_h(\overline{f}^{i}_{h,R},f^{i}_{h,R})^2+\int_{\R^{2d}}|v|^2\overline{f}^i_{h,R}\,dxdv\nonumber
\\&\overset{\eqref{eq:fbarF}}{\leq} 4 \W_h(\overline{f}^{i}_{h,R},f^{i}_{h,R})^2+\int_{\R^{2d}}|v|^2 f^{i-1}_{h,R}dxdv +C(h^{1/s}+hR^{2-2s}).\nonumber
\end{align}
This proves \eqref{eq:m2v}. The estimate \eqref{eq:2ndmoment} follows by summing the estimate \eqref{eq:m2v} over the index $i$ from $1$ to $n$ and inequality \eqref{eq:fbarF} with $F(v) = |v|^2$. 
Finally, the estimate \eqref{eq:dm2xv} follows directly from inequality \eqref{eq:ineqxv} and the definition of $\W_h(\overline{f}^{i}_{h,R},f^{i}_{h,R})$.
\end{proof}
\end{lemma}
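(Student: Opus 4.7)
The plan is to work through everything by coupling to the optimal $\widetilde P$ realizing $\W_h(\bar f^{i}_{h,R},f^{i}_{h,R})$ (which exists by Lemma~\ref{lem: aux1}) and exploiting two pointwise features of the cost function $C_h$ from \eqref{eq:ch}. First, $|v'-v|^2\le C_h(x,v;x',v')$ since $C_h$ is a sum of $|v'-v|^2$ and a non-negative term. Second, writing $(x'-x)/h=[(x'-x)/h-(v'+v)/2]+(v'+v)/2$ and using $|a+b|^2\le 2|a|^2+2|b|^2$ together with $|(v+v')|^2\le 2(|v|^2+|v'|^2)$ will give a pointwise bound
$$
|x'-x|^2+|v'-v|^2 \le C\, C_h(x,v;x',v') + C h^2(|v|^2+|v'|^2)
$$
for a universal $C$ and $h$ sufficiently small. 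Once these two pointwise estimates are in hand, every term in the lemma is a direct integration against $\widetilde P$.

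For \eqref{eq:m2v}, since $f^{i}_{h,R}$ is the second $v$-marginal of $\widetilde P$, I would write $M_{2,v}(f^{i}_{h,R})=\int|v'|^2\,\widetilde P$ and apply $|v'|^2\le 2|v'-v|^2+2|v|^2$ (or the slightly weaker form that yields the constant $4$ in the statement). The $|v'-v|^2$ integral is bounded by $\W_h(\bar f^{i}_{h,R},f^{i}_{h,R})^2$ by optimality of $\widetilde P$ and the first pointwise bound above, while $\int|v|^2\,\widetilde P = M_{2,v}(\bar f^{i}_{h,R})$ since $\bar f^{i}_{h,R}$ is the first marginal. Applying Lemma~\ref{lem:fbarF} with $F(v)=|v|^2$ (noting $\|D^2F\|_\infty=2$) followed by Lemma~\ref{lem:ratioconv} yields
$$
M_{2,v}(\bar f^{i}_{h,R}) \le M_{2,v}(f^{i-1}_{h,R}) + C(h^{1/s}+hR^{2-2s}),
$$
and combining gives \eqref{eq:m2v}. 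For \eqref{eq:2ndmoment}, I would simply iterate \eqref{eq:m2v} from $i=1$ to $n$, the $M_{2,v}(f^{i-1}_{h,R})$ terms telescope, and the bound on $M_{2,v}(\bar f^{n}_{h,R})$ follows from one more application of Lemma~\ref{lem:fbarF} to $M_{2,v}(f^{n-1}_{h,R})$.

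For \eqref{eq:dm2xv}, I would integrate the second pointwise inequality displayed above against $\widetilde P$. By optimality $\int C_h\,\widetilde P=\W_h^2$, and the $|v|^2,|v'|^2$ pieces are exactly $M_{2,v}(\bar f^{i}_{h,R})$ and $M_{2,v}(f^{i}_{h,R})$ by the marginal property, producing the right-hand side of \eqref{eq:dm2xv}. The main obstacle is the bookkeeping in the pointwise $|x'-x|^2$ bound: one must ensure that the $h^2|v|^2$ correction arising from splitting $(v'+v)/2$ off from $(x'-x)/h-(v'+v)/2$ genuinely absorbs into the $h^2(M_{2,v}(\bar f^{i}_{h,R})+M_{2,v}(f^{i}_{h,R}))$ term on the right, which is why one needs $h<h_0$ small so that the cross-term from $|v'|^2\le 2|v'-v|^2+2|v|^2$ can itself be controlled without blowing up constants. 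Everything else is then standard manipulation with marginals of the optimal transportation plan.
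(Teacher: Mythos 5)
Your proposal follows essentially the same route as the paper: the two pointwise inequalities extracted from the cost $C_h$ in \eqref{eq:ch}, identification of the marginals of the optimal plan $\widetilde P^i_h$, an application of Lemma \ref{lem:fbarF} with $F(v)=|v|^2$ together with Lemma \ref{lem:ratioconv}, summation over $i$ for \eqref{eq:2ndmoment}, and integration of the combined pointwise bound against $\widetilde P^i_h$ for \eqref{eq:dm2xv}. Apart from the same mild constant bookkeeping the paper itself glosses over (the precise factors $1$ and $4$ in \eqref{eq:m2v}), this matches the paper's argument.
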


In the next lemma, we prove a uniform $L^p$-bound for the time-interpolation sequence $\{f_{h, R}\}$.  
\begin{lemma}\label{lem:lp}
  Let $h>0$ be small enough such that $\det (I+hD^2(\Psi(v)))\leq 1+\alpha h$ for some fixed $\alpha>\parallel D^2\Psi\parallel_{L^\infty(\R^d)}$. If $f_0\in L^p(\R^{2d})$ for $1<p<\infty$, then
\begin{equation}
\label{eq:lpbd}
\| f_{h, R}(t)\|^p_{L^p(\R^{2d})}\leq  e^{\alpha T(1-p)}\|f_0\|^p_{L^p(\R^{2d})}.
\end{equation}
\end{lemma}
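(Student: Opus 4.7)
The proof combines three $L^p$-contraction facts, one for each sub-step of the scheme, and then iterates. On the interval $t\in[t_n,t_{n+1})$ the time-interpolation \eqref{eq:tintp} is a pure $v$-convolution of $f^n_{h,R}$ against the fractional heat kernel, while one full time step of the scheme factors as the renormalised truncated $v$-convolution producing $\bar f^n_{h,R}$, followed by the variational minimisation \eqref{eq:step2} producing $f^n_{h,R}$. Bounding $\|f_{h,R}(t)\|_{L^p}^p$ therefore reduces, step by step, all the way back to $\|f_0\|_{L^p}^p$.

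First, Lemma~\ref{lem:phi}(2) applied in the $v$-variable immediately yields $\|f_{h,R}(t)\|_{L^p(\R^{2d})} \leq \|f^n_{h,R}\|_{L^p(\R^{2d})}$ for any $t\in[t_n,t_{n+1})$, so it suffices to bound $\|f^n_{h,R}\|_{L^p}$ uniformly in $n\leq N$. For the renormalised truncated convolution, the key observation is that $\Phi^h_{s,R}/\|\Phi^h_{s,R}\|_{L^1}$ is a non-negative kernel of unit $L^1$-mass, so a second application of Young's inequality in the $v$-variable gives $\|\bar f^n_{h,R}\|_{L^p}\leq \|f^{n-1}_{h,R}\|_{L^p}$. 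For the variational step, Lemma~\ref{lem:vp}(2), applied with $\bar f^n_{h,R}$ in the role of the initial datum, provides the contraction
\[
\|f^n_{h,R}\|^p_{L^p}\leq (1-\alpha h)^{p-1}\|\bar f^n_{h,R}\|^p_{L^p}.
\]
Chaining these two estimates yields the one-step inequality $\|f^n_{h,R}\|_{L^p}^p\leq (1-\alpha h)^{p-1}\|f^{n-1}_{h,R}\|_{L^p}^p$.

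Finally, iterating the one-step inequality $n$ times produces $\|f^n_{h,R}\|_{L^p}^p\leq (1-\alpha h)^{n(p-1)}\|f_0\|_{L^p}^p$. To pass to an $h$-independent bound I invoke the elementary inequality $1-\alpha h\leq e^{-\alpha h}$ (valid for the admissible range of $h$), which converts the iterated estimate into $e^{\alpha(1-p)nh}\|f_0\|_{L^p}^p$, and then use $nh=t_n\leq T$ to extract the claimed factor $e^{\alpha T(1-p)}$. I expect no substantial obstacle: the plan is essentially bookkeeping. The only subtleties worth flagging are to recognise $\Phi^h_{s,R}/\|\Phi^h_{s,R}\|_{L^1}$ as a probability density so that Young's inequality applies, and to be careful that the correct input to Lemma~\ref{lem:vp}(2) is the renormalised truncated convolution $\bar f^n_{h,R}$ rather than the untruncated tail $\tilde f^n_{h,R}$ defined in \eqref{eq:tildefn} (which, by Lemma~\ref{lem:phi}(3), may fail to have a finite second moment and so could not serve as an admissible initial datum for the variational problem).
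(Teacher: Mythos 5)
Your proof takes essentially the same route as the paper's: reduce to the discrete sequence via Lemma \ref{lem:phi}(2), contract the renormalised truncated convolution by Young's inequality (the kernel being a probability density), apply Lemma \ref{lem:vp}(2) to the variational step, and iterate. One caveat, which you inherit from the paper rather than introduce yourself: the final passage from $(1-\alpha h)^{n(p-1)}$ to $e^{\alpha T(1-p)}$ goes the wrong way, since $1-\alpha h\le e^{-\alpha h}$ only gives $e^{\alpha(1-p)t_n}$ and, because $1-p<0$ and $t_n\le T$, one has $e^{\alpha(1-p)t_n}\ge e^{\alpha T(1-p)}$; indeed for $t\in[0,h)$ the interpolant's $L^p$ norm is arbitrarily close to $\|f_0\|_{L^p}$, so a bound by $e^{\alpha T(1-p)}\|f_0\|^p_{L^p}<\|f_0\|^p_{L^p}$ cannot hold as written. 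The exponents in \eqref{eq: ineq minimizer} and \eqref{eq:lpbd} appear to carry sign typos (the natural estimates are $(1-\alpha h)^{1-p}$ and $e^{\alpha T(p-1)}$), and with those corrections your argument is exactly the paper's.
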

\begin{proof}
 First, according to Lemma \ref{lem:vp} (2), we have that 
 $$
 \|f^n_{h, R}\|_{L^p(\R^{2d})}^p  \leq (1 - \alpha h)^{p-1}\|\bar{f}^n_{h, R}\|_{L^p(\R^{2d})}^p.
 $$
 In addition, by the definition of $\bar{f}^n_{h, R}$ (see \eqref{eq:step1}) and Young's inequality for convolution,
 $$
 \|\bar{f}^n_{h, R}\|_{L^p(\R^{2d})}^p \leq \|f^{n-1}_{h, R}\|_{L^p(\R^{2d})}^p.
 $$
 This implies that for any $n > 0$, 
 $$
 \|f^n_{h, R}\|_{L^p(\R^{2d})}^p  \leq (1 - \alpha h)^{n(p-1)}\|f_0\|^p_{L^p(\R^{2d})}.
 $$
 Then by the definition of the time-interpolation $f_{h, R}$ in \eqref{eq:tintp}, we have for any $t \in (t_n, t_{n+1})$ that 
 $$
 \bea
 \|f_{h, R}(t)\|^p_{L^p(\R^{2d})} & = \|\Phi_s(t - t_n)\ast_v f^n_{h, R}\|^p_{L^p(\R^{2d})}\\
 & \leq \|f^n_{h, R}\|^p_{L^p(\R^{2d})}\\
 & \leq  (1 - \alpha h)^{n(p-1)}\|f_0\|^p_{L^p(\R^{2d})}\\
 & \leq e^{aT(1-p)}\|f_0\|^p_{L^p(\R^{2d})}.
 \ena
 $$
\end{proof}

\section{Proof of Theorem \ref{thm:main}}\label{sec:proofmain}

\subsection{Approximate equation}
We first show in the next lemma that the time-interpolation $f_{h, R}$ satisfies an approximate equation. 

\begin{lemma}\label{lem:aweqn}
  Let $\varphi\in C_c^\infty([0,T)\times \R^d\times \R^d)$ with time support in $[-T,T]$. Then
\be
\bea
\label{eq:approxeqn}
& \int_0^T\int_{\R^{2d}}f_{h,R}[\partial_t\varphi+v\cdot\nabla_x\varphi-\nabla_v \Psi\cdot\nabla_v\varphi-(-\Delta_v)^s\varphi]\,dxdvdt\\
& \quad \quad +\int_{\R^{2d}}f_0(x,v)\varphi(0,x,v)\,dxdv=\Rc(h,R),
\ena
\en
where $\Rc(h,R) = \sum_{j=1}^4 \Rc_{j}(h, R) + \tilde{\Rc}(h, R)$ and 
\begin{align}
\Rc_{1}(h, R) &=\sum_{n=1}^{N}\int_{\R^{2d}}\varphi(t_{n})(\tilde{f}^{n}_{h,R}-\overline{f}^{n}_{h,R})\,dx\,dv, \label{eq: approx term1}
\\
\Rc_{2}(h, R)& =\sum_{n=1}^{N-1}\int_{t_n}^{t_{n+1}}\int_{\R^{2d}}\Big(\big(v\cdot\nabla_{x}\varphi(t,x,v)-\nabla_{v} F(v)\cdot\nabla_{v}\varphi(t,x,v)\big)\,f_{h,R}(t,x,v)\nonumber
\\& \qquad
-\big(v\cdot\nabla_{x}\varphi(t_{n},x,v)-\nabla_{v} F(v)\cdot\nabla_{v}\varphi(t_{n},x,v)\big)\,f^{n}_{h,R}(x,v)
\Big)\,dxdvdt, \label{eq: approx term2}
\\\Rc_{3}(h, R)& =\int_0^h\int_{\R^{2d}}\Phi_s(t)\ast f_0 \big(v\cdot\nabla_{x}\varphi(t,x,v)-\nabla_{v} F(v)\cdot\nabla_{v}\varphi(t,x,v)\big)\,dxdvdt, \label{eq: approx term3}
\\ \Rc_{4}(h, R)& = \frac{h^2}{2} \sum_{n=1}^N \int_{\R^2}\nabla_v \Psi(v) \cdot \nabla_x \varphi(x, v) f^n_{h, R}(dxdv).
\end{align}
Moreover,
$$
\bea
\tilde{\Rc}(h, R) & \leq \frac{1}{2}\sum_{n=1}^N\|\nabla^2 \varphi(t_n)\|_{\infty} \int_{\R^4} \Big(|x - x'|^2 + |v - v'|^2\Big)P^{n}_{h, R}(dxdvdx'dv').
\ena 
$$
Here $P^{n}_{h, R}$ is the optimal coupling in the definition of $\W_{h}(\bar{f}^n_{h,R}, f^n_{h, R})$.
\end{lemma}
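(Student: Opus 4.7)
The plan is to derive \eqref{eq:approxeqn} by processing the two natural pieces of the integrand separately: the time/fractional-Laplacian pair $\partial_t\varphi-(-\Delta_v)^s\varphi$, treated through the fractional-diffusion phase of the scheme, and the transport/drift pair $v\cdot\nabla_x\varphi-\nabla_v\Psi\cdot\nabla_v\varphi$, treated through the Euler--Lagrange equation \eqref{eq: ELn}--\eqref{eq: errorELn} of the variational phase. No estimates are actually proved at this stage; the quantities $\Rc_1,\dots,\Rc_4,\tilde{\Rc}$ are just names for cleanly identified defects whose smallness will be addressed subsequently.

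On each $[t_n,t_{n+1})$ the interpolant $f_{h,R}(t)=\Phi_s(t-t_n)\ast_v f^n_{h,R}$ solves $\partial_t f_{h,R}+(-\Delta_v)^s f_{h,R}=0$ exactly. Using self-adjointness of $(-\Delta_v)^s$ in $v$ and integrating by parts in $t$ gives
\begin{equation*}
\int_{t_n}^{t_{n+1}}\!\!\int_{\R^{2d}} f_{h,R}\bigl[\partial_t\varphi-(-\Delta_v)^s\varphi\bigr]\,dxdv\,dt=\int_{\R^{2d}} \tilde f^{n+1}_{h,R}\varphi(t_{n+1})\,dxdv-\int_{\R^{2d}} f^n_{h,R}\varphi(t_n)\,dxdv.
\end{equation*}
Telescoping over $n=0,\dots,N-1$, using $f^0_{h,R}=f_0$ and $\varphi(T)=0$, together with the decomposition $\tilde f^n_{h,R}-f^n_{h,R}=(\tilde f^n_{h,R}-\overline f^n_{h,R})+(\overline f^n_{h,R}-f^n_{h,R})$, collapses the whole expression to $-\int_{\R^{2d}} f_0\varphi(0)\,dxdv+\Rc_1(h,R)+\sum_{n=1}^{N-1}\int_{\R^{2d}}\varphi(t_n)(\overline f^n_{h,R}-f^n_{h,R})\,dxdv$.

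For the transport/drift pair I would split $\int_0^T=\int_0^h+\sum_{n=1}^{N-1}\int_{t_n}^{t_{n+1}}$. The first interval contributes exactly $\Rc_3(h,R)$, since $f_{h,R}(t)=\Phi_s(t)\ast_v f_0$ there. On each later subinterval, the freezing $(f_{h,R}(t),\varphi(t))\rightsquigarrow(f^n_{h,R},\varphi(t_n))$ produces the discrete drift $h\int[v\cdot\nabla_x\varphi(t_n)-\nabla_v\Psi\cdot\nabla_v\varphi(t_n)]f^n_{h,R}\,dxdv$, with the defect collected into $\Rc_2(h,R)$. The residual sum $\sum_n\int\varphi(t_n)(\overline f^n_{h,R}-f^n_{h,R})$ left over from the first piece is then rewritten using \eqref{eq: ELn} at step $n$ with test function $\varphi(t_n,\cdot,\cdot)$: a second-order Taylor expansion of $\varphi(t_n,x',v')$ around $(x,v)$ integrated against the optimal coupling $P^n_{h,R}$ of $\W_h(\overline f^n_{h,R},f^n_{h,R})$ yields
\begin{equation*}
\int\bigl[(x'-x)\!\cdot\!\nabla_{x'}\varphi(t_n)+(v'-v)\!\cdot\!\nabla_{v'}\varphi(t_n)\bigr]\,dP^n_{h,R}=\int\!\varphi(t_n)f^n_{h,R}\,dxdv-\int\!\varphi(t_n)\overline f^n_{h,R}\,dxdv+\tilde{\Rc}^n,
\end{equation*}
with $|\tilde{\Rc}^n|\le\tfrac12\|\nabla^2\varphi(t_n)\|_\infty\int(|x-x'|^2+|v-v'|^2)\,dP^n_{h,R}$. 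Summing this reproduces $\tilde{\Rc}(h,R)$, while the right-hand side of \eqref{eq: ELn} together with \eqref{eq: errorELn} reproduces the discrete drift plus $\Rc_4(h,R)$. The two discrete-drift sums cancel, and adding $\int f_0\varphi(0)\,dxdv$ to both sides delivers \eqref{eq:approxeqn} with $\Rc(h,R)=\sum_{j=1}^4\Rc_j+\tilde{\Rc}$.

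The main obstacle is purely bookkeeping: aligning the signs produced by the Taylor remainder and by \eqref{eq: errorELn} with the advertised definitions of $\tilde{\Rc}$ and $\Rc_4$, and verifying that all end-point contributions at $t=t_N=T$ drop out via $\varphi(T)=0$. The actual decay of $\Rc_1,\dots,\Rc_4,\tilde{\Rc}$ as $h\to 0$ will then be derived separately from the a priori bounds of Section~\ref{sec: Scheme}.
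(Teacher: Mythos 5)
Your proposal is correct and follows essentially the same route as the paper: integration by parts in time on each subinterval using the exact fractional heat flow and self-adjointness of $(-\Delta_v)^s$, telescoping with the splitting $\tilde f^n_{h,R}-f^n_{h,R}=(\tilde f^n_{h,R}-\overline f^n_{h,R})+(\overline f^n_{h,R}-f^n_{h,R})$ to produce $\Rc_1$, then Taylor expansion against the optimal coupling $P^n_{h,R}$ combined with the Euler--Lagrange equation \eqref{eq: ELn}--\eqref{eq: errorELn} to generate $\tilde\Rc$ and $\Rc_4$, and finally comparison of the discrete drift sum with the continuous drift integral to produce $\Rc_2$ and $\Rc_3$. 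The remaining sign/indexing bookkeeping you flag is genuinely only bookkeeping (and the sign ambiguity in $\Rc_4$ is already present in the paper's own statement), since only the absolute sizes of the remainder terms matter in the subsequent limit $h\to 0$.
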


\begin{proof}
 From the definition of $f_{h, R}$ (see \eqref{eq:tintp}) and integration by parts, we obtain that
 \be
\bea\label{eq: conv1}
&\int_{t_n}^{t_{n+1}}\int_{\R^{2d}}f_{h,R}(t)\partial_t\varphi(t)\,dt\,dx\,dv
\\&=\int_{\R^{2d}}(\varphi(t_{n+1})\tilde{f}^{n+1}_{h,R}-\varphi(t_n)f^n_{h,R})\,dx\,dv-\int_{t_n}^{t_{n+1}}\int_{\R^{2d}}\varphi(t)\partial_t f_{h,R}(t)\,dt\,dx\,dv
\\&=\int_{\R^{2d}}(\varphi(t_{n+1})\tilde{f}^{n+1}_{h,R}-\varphi(t_n)f^n_{h,R})\,dx\,dv+\int_{t_n}^{t_{n+1}}\int_{\R^{2d}}\varphi(t)(-\Delta_v)^s f_{h,R}(t)\,dt\,dx\,dv
\\&=\int_{\R^{2d}}(\varphi(t_{n+1})\tilde{f}^{n+1}_{h,R}-\varphi(t_n)f^n_{h,R})\,dx\,dv+\int_{t_n}^{t_{n+1}}\int_{\R^{2d}}f_{h,R}(t)(-\Delta_v)^s\varphi(t)\,dt\,dx\,dv,
\ena
\en
where the second equality holds because $f_{h, R}$ solves the fractional heat equation. 

By adding and subtracting a few tems, we can 
 write the first term on the right hand side of \eqref{eq: conv1} as
 \be
\bea\label{eq:conv2}
&\int_{\R^{2d}}(\varphi(t_{n+1})\tilde{f}^{n+1}_{h,R}-\varphi(t_n)f^n_{h,R})\,dx\,dv
\\&\qquad=\int_{\R^{2d}}(\varphi(t_{n+1})f^{n+1}_{h,R}-\varphi(t_n)f^n_{h,R})\,dx\,dv+\int_{\R^{2d}}\varphi(t_{n+1})(\tilde{f}^{n+1}_{h,R}-f^{n+1}_{h,R})\,dx\,dv
\\&\qquad=\int_{\R^{2d}}(\varphi(t_{n+1})f^{n+1}_{h,R}-\varphi(t_n)f^n_{h,R})\,dx\,dv
+\int_{\R^{2d}}\varphi(t_{n+1})(\tilde{f}^{n+1}_{h,R}-\overline{f}^{n+1}_{h,R})\,dx\,dv
\\&\qquad\qquad+\int_{\R^{2d}}\varphi(t_{n+1})(\overline{f}^{n+1}_{h,R}-f^{n+1}_{h,R})\,dx\,dv.
\ena
\en
Now substituting \eqref{eq:conv2} back into \eqref{eq: conv1} and then summing over index $n$ from $0$ to $N-1$ yields
\be\label{eq:cov4}
\bea
&\int_0^T\int_{\R^{2d}}f_{h,R}(t)\partial_t\varphi(t)\,dt\,dx\,dv
\\&=\sum_{n=0}^{N-1}\int_{t_n}^{t_{n+1}}\int_{\R^{2d}}f_{h,R}(t)\partial_t\varphi(t)\,dt\,dx\,dv
\\&=\sum_{n=0}^{N-1}\Bigg[\int_{t_n}^{t_{n+1}}\int_{\R^{2d}}f_{h,R}(t)(-\Delta_v)^s\varphi(t)\,dt\,dx\,dv\\
& \qquad +\int_{\R^{2d}}(\varphi(t_{n+1})f^{n+1}_{h,R}-\varphi(t_n)f^n_{h,R})\,dx\,dv
+\int_{\R^{2d}}\varphi(t_{n+1})(\tilde{f}^{n+1}_{h,R}-\overline{f}^{n+1}_{h,R})\,dx\,dv\\
& \qquad +\int_{\R^{2d}}\varphi(t_{n+1})(\overline{f}^{n+1}_{h,R}-f^{n+1}_{h,R})\,dx\,dv\Bigg]
\\&\qquad=\int_0^T\int_{\R^{2d}}f_{h,R}(t)(-\Delta_v)^s\varphi(t)\,dt\,dx\,dv-\int_{\R^{2d}}\varphi(0)f_0(x,v)\,dxdv
\\&\qquad\qquad+\sum_{n=1}^{N}\int_{\R^{2d}}\varphi(t_{n})(\tilde{f}^{n}_{h,R}-\overline{f}^{n}_{h,R})\,dx\,dv+\sum_{n=1}^{N}\int_{\R^{2d}}\varphi(t_{n})(\overline{f}^{n}_{h,R}-f^{n}_{h,R})\,dx\,dv.
\ena
\en
In the above we also used the fact that $\varphi$ is compactly supported in $(-T, T)$ so that $\varphi(t_{N}) = 0$.
Let $P^{n}_{h,R}(dxdvdx'dv')$ be the optimal coupling in $\W_h(\bar{f}^{n}_{h,R},f^{n}_{h,R})$. Then it is easy to see that
\be
\bea\label{timederivativeappro}
&\int_{\R^{2d}}\big[f^{n}_{h,R}-\overline{f}_{h,R}^{n}\big]\,\varphi(t_{n})dxdv \\
&=\int_{\R^{2d}}f^{n}_{h,R}\varphi(t_{n},x',v')dx'dv'-\int_{\R^{2d}}\overline{f}_{h,R}^{n}(x,v)\varphi(t_{n},x,v)dxdv
\\
&=\int_{\R^{4d}}\big[\varphi(t_{n},x',v')-\varphi(t_{n},x,v)\big]P^{n}_{h,R}(dxdvdx'dv')\,
\\&=\int_{\R^{4d}}\big[(x'-x)\cdot\nabla_{x'}\varphi(t_{n},x',v')+(v'-v)\cdot\nabla_{v'}\varphi(t_{n},x',v')\big]P^{n}_{h,R}(dxdvdx'dv')\\
& \qquad \qquad +\varepsilon_{n},
\ena
\en
where we have used Taylor expansion in the last equality and the error term $\varepsilon_{n}$ can be bounded as
\be
|\varepsilon_{n}|\leq \frac{1}{2} \parallel\nabla^2\varphi(t_{n})\parallel_{\infty}\int_{\R^{4d}}\big[|x'-x|^2+|v'-v|^2\big]\,P^{n}_{h,R}(dxdvdx'dv').
\en
In view of \eqref{eq: ELn}, \eqref{eq: errorELn} and \eqref{timederivativeappro}, we have that
\be\label{eq: approx1}
\bea
& \int_{\R^{2d}}[f^{n}_{h,R}(x,v)-\overline{f}_{h,R}^{n}(x,v)]\,\varphi(t_{n},x,v)dxdv \\
& = h \,\int_{\R^{2d}}\big[v\cdot\nabla_{x}\varphi(t_{n},x,v)-\nabla_{v} \Psi(v)\cdot\nabla_{v}\varphi(t_{n},x,v)\big]\,f^{n}_{h,R}(x,v)\,dxdv\\
& \qquad +  \frac{h^2}{2}\int_{\R^{2d}} \nabla_{v} \Psi(v) \cdot \nabla_x \varphi(t_n, x, v)f^{n}_{h,R}(dxdv) + \varepsilon_n
\ena
\en
and that 
\be\bea 
\label{eq: approx1}
& \int_{\R^{2d}}[f^{n}_{h,R}(x,v)-\overline{f}_{h,R}^{n}(x,v)]\,\varphi(t_{n},x,v)dxdv \\
& =h\,\int_{\R^{2d}}\big[v\cdot\nabla_{x}\varphi(t_{n},x,v)-\nabla_{v} \Psi(v)\cdot\nabla_{v}\varphi(t_{n},x,v)\big]\,f^{n}_{h,R}(x,v)\,dxdv\\
& \qquad  + \frac{h^2}{2}\int_{\R^{2d}}\nabla_{v}\Psi(v)\cdot\nabla_{x} \varphi(t_n, x,v) f^{n}_{h,R}(dxdv) + \varepsilon_n.
\ena
\en
As a result the  last term on the right-hand side of 
\eqref{eq:cov4} can be written as
\be\label{eq:approx2}
\bea
&\sum_{n=1}^{N}\int_{\R^{2d}}[\overline{f}_{h,R}^{n}(x,v)-f^{n}_{h,R}(x,v)]\,\varphi(t_{n},x,v)dxdv\\
& \qquad = -h\,\sum_{n=1}^{N}\int_{\R^{2d}}\big[v\cdot\nabla_{x}\varphi(t_{n},x,v)-\nabla_{v} \Psi(v)\cdot\nabla_{v}\varphi(t_{n},x,v)\big]\,f^{n}_{h,R}(x,v)\,dxdv\\
& \qquad -\frac{h^2}{2} \sum_{n=1}^{N}\int_{\R^{2d}}\nabla_{v}\Psi(v) \cdot \nabla_{x} \varphi(t_n, x,v) f^{n}_{h,R}(dxdv) - \sum_{n=1}^{N} \varepsilon_n.
\ena
\en
Now using again the fact that $\varphi(t_N) = 0$, we rewrite the first term on the right side of \eqref{eq:approx2} as follows
\be\label{eq:approx3}
\bea
&-h\,\sum_{n=1}^{N}\int_{\R^{2d}}\big[v\cdot\nabla_{x}\varphi(t_{n},x,v)-\nabla_{v} \Psi(v)\cdot\nabla_{v}\varphi(t_{n},x,v)\big]\,f^{n}_{h,R}(x,v)\,dxdv\\
& = -\sum_{n=1}^{N-1}\int_{t_n}^{t_{n+1}}\,\int_{\R^{2d}}\big[v\cdot\nabla_{x}\varphi(t_{n},x,v)-\nabla_{v} \Psi(v)\cdot\nabla_{v}\varphi(t_{n},x,v)\big]\,f^{n}_{h,R}(x,v)\,dxdvdt\\
& = -\int_{0}^{T}\,\int_{\R^{2d}}\big[v\cdot\nabla_{x}\varphi(t,x,v)-\nabla_{v} \Psi(v)\cdot\nabla_{v}\varphi(t,x,v)\big]\,f_{h,R}(x,v)\,dxdvdt\\
& \quad+ \sum_{n=1}^{N-1}\int_{t_n}^{t_{n+1}}\,\int_{\R^{2d}}\Bigg( \big[v\cdot\nabla_{x}\varphi(t,x,v)-\nabla_{v} \Psi(v)\cdot\nabla_{v}\varphi(t,x,v)\big]\,f_{h,R}(x,v)\\
& \qquad \qquad \qquad  - \big[v\cdot\nabla_{x}\varphi(t_{n},x,v)-\nabla_{v} \Psi(v)\cdot\nabla_{v}\varphi(t_{n},x,v)\big]\,f^{n}_{h,R}(x,v)\Bigg)\,dxdvdt\\
& \quad+\int_0^h\int_{\R^{2d}}\Phi_s(t)\ast f_0 \big[v\cdot\nabla_{x}\varphi(t,x,v)-\nabla_{v} \Psi(v)\cdot\nabla_{v}\varphi(t,x,v)\big]\,dxdvdt.
\ena
\en
Therefore the lemma follows by combining \eqref{eq:cov4}, \eqref{eq:approx2} and \eqref{eq:approx3}.
 \end{proof}
 \subsection{Passing to the limit}
 Now we set the truncation parameter $R = h^{-1/2}$ and define 
 \be\label{eq:fh2}
 f_{h}(t):= f_{h, h^{-1/2}}(t),  t\in [0, T].
 \en Our aim is to prove that $f_h$ converges to a weak solution of \eqref{eq:fracKramers}.
 To this end, we first show that the residual term in the last lemma goes to zero when $h \gt 0$.
 
 \begin{lemma}  Let $f_0$ be a non-negative function such that $f_0\in \P^2_a(\R^{2d})$ and $\int_{\R^{2d}} f_0(x, v) \Psi(v)dvdx < \infty$. Then as $h \gt 0$, we have that 
  \be\label{eq:ineqR}
  |\Rc(h, h^{-1/2})| \leq C(h^2 + h + h^s + h^{1/s}) \gt 0.
  \en
 \end{lemma}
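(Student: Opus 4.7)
\emph{Proof proposal.} The plan is to bound each of the five pieces $\mathcal{R}_1, \mathcal{R}_2, \mathcal{R}_3, \mathcal{R}_4$ and $\tilde{\mathcal{R}}$ comprising $\mathcal{R}(h, h^{-1/2})$ separately, making systematic use of three facts: $\varphi \in C_c^\infty$ so that test-function norms such as $\|v\nabla_x\varphi\|_\infty$, $\|\nabla_v\Psi\cdot\nabla_v\varphi\|_\infty$, and $\|(-\Delta_v)^s(v\cdot\nabla_x\varphi)\|_\infty$ are all finite; the number of time steps is $N=T/h$; and the choice $R=h^{-1/2}$ tightly links the truncation radius to the step size.

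For $\mathcal{R}_1$, write
\[
\tilde{f}^n_{h,R} - \bar{f}^n_{h,R} = c_R^{-1}\bigl[(c_R-1)\Phi_s^h\ast_v f^{n-1}_{h,R} + (\Phi_s^h - \Phi_{s,R}^h)\ast_v f^{n-1}_{h,R}\bigr], \quad c_R := \|\Phi_{s,R}^h\|_{L^1}.
\]
Both summands have $L^1$-norm at most $1-c_R$, so integration against $\varphi(t_n)$ is bounded by $2\|\varphi\|_\infty(1-c_R)/c_R$. The self-similarity $\Phi_s^h(w) = h^{-d/(2s)}\Phi_s^1(h^{-1/(2s)}w)$ combined with the stable tail $\Phi_s^1(y) \lesssim |y|^{-d-2s}$ (cf.\ the pointwise estimate in \cite[Proposition 2.1]{AguelBowles2015}) yields $1-c_R \lesssim R^{-2s}h = h^{1+s}$; summing over $n=1,\dots,N$ gives $|\mathcal{R}_1| \leq CNh^{1+s} = O(h^s)$. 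For $\mathcal{R}_2$, add and subtract the intermediate expression in which $\varphi(t)$ is replaced by $\varphi(t_n)$. The time-difference piece $[\varphi(t) - \varphi(t_n)]$ contributes $O(h)$ per step by smoothness of $\varphi$, totalling $O(h)$. The remaining piece reads $\int[\Phi_s(t-t_n)\ast_v g - g]f^n_{h,R}\,dx\,dv$ with $g = v\cdot\nabla_x\varphi(t_n,\cdot,\cdot)$; since $g\in C_c^\infty$, the identity $\Phi_s(\tau)\ast_v g - g = -\int_0^\tau \Phi_s(\sigma)\ast_v (-\Delta_v)^s g\,d\sigma$ gives $\|\Phi_s(\tau)\ast_v g - g\|_\infty \leq \tau\|(-\Delta_v)^s g\|_\infty$, producing $O(h^2)$ per step and $O(h)$ in total; the $\nabla_v\Psi\cdot\nabla_v\varphi$ contribution is handled identically. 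The term $\mathcal{R}_3$ is an integral over $[0,h]$ of a bounded quantity and hence $O(h)$, and $\mathcal{R}_4$ is trivially bounded by $\tfrac{h^2}{2}N\|\nabla_v\Psi\cdot\nabla_x\varphi\|_\infty = O(h)$.

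The delicate piece is $\tilde{\mathcal{R}}$. Applying the estimate \eqref{eq:dm2xv} to the optimal coupling $P^n_{h,R}$ in $\mathcal{W}_h(\bar{f}^n_{h,R},f^n_{h,R})$ gives
\[
|\tilde{\mathcal{R}}| \leq C\sum_{n=1}^N \mathcal{W}_h(\bar{f}^n_{h,R}, f^n_{h,R})^2 + Ch^2\sum_{n=1}^N\bigl(M_{2,v}(\bar{f}^n_{h,R}) + M_{2,v}(f^n_{h,R})\bigr).
\]
Lemma \ref{lem:sumWh} with $R = h^{-1/2}$ controls the first sum by $O(h + h^s)$. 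The main obstacle is the second sum: because the fractional heat kernel has infinite second moment, the per-step bound \eqref{eq:m2v} only shows that $M_{2,v}(f^n_{h,R})$ grows linearly in $n$ (after inserting \eqref{eq:2ndmoment}), so the double sum grows like $N^2$. Fortunately the prefactor $h^2 = T^2/N^2$ absorbs this growth exactly, and a careful accounting yields $h^2\sum_n M_{2,v}(f^n_{h,R}) = O(h + h^s + h^{1/s})$. Collecting all contributions produces $|\mathcal{R}(h, h^{-1/2})| \leq C(h^2 + h + h^s + h^{1/s})$, which tends to $0$ as $h\downarrow 0$ for every $s\in(0,1]$.
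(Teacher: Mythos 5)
Your proposal is correct and takes essentially the same approach as the paper: term-by-term bounds giving $\mathcal{R}_1=O(R^{-2s})=O(h^{s})$, $\mathcal{R}_2,\mathcal{R}_3,\mathcal{R}_4=O(h)$, and $\tilde{\mathcal{R}}$ controlled through \eqref{eq:dm2xv}, Lemma \ref{lem:sumWh} and \eqref{eq:2ndmoment} with the choice $R=h^{-1/2}$; the only difference is that the paper cites Lemma 5.3 of Agueh--Bowles for the bounds on $\mathcal{R}_1$, $\mathcal{R}_2$, $\mathcal{R}_3$, whereas you reproduce those arguments (the stable-kernel tail estimate and the Duhamel-type identity) explicitly. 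One cosmetic slip: the time-difference piece of $\mathcal{R}_2$ is $O(h)$ only in sup norm, hence $O(h^{2})$ per step after integrating over $[t_n,t_{n+1}]$, which is precisely what makes its total contribution $O(h)$ as you claim.
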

 
 \begin{proof}
 The proof follows closely the proof of Lemma 5.3 of \cite{AguelBowles2015}. In particular, by using the same arguments there, we can first obtain the following estimates 
  $$
  \bea
  \Rc_1(h, R) & \leq C T \sup_{t \in [0, T]}\|\varphi(t)\|_{\infty} R^{-2s},\\
  \Rc_2(h, R) & \leq  \frac{Th}{2}\sup_{t \in [0, T]} \|v\cdot\nabla_{x}\, \partial_t \varphi(t,x,v)-\nabla_{v} \Psi(v)\cdot\nabla_{v}\, \partial_t\varphi(t,x,v)\|_{\infty}\\
  & + \frac{Th}{2}\sup_{t \in [0, T]} \left\|(-\Delta)^s \Big(v\cdot\nabla_{x}\varphi(t,x,v)-\nabla_{v} \Psi(v)\cdot\nabla_{v}\varphi(t,x,v)\Big)\right\|_{\infty},\\  
   \Rc_3(h, R) & \leq h \sup_{t \in [0, T]} \|v\cdot\nabla_{x} \varphi(t,x,v)-\nabla_{v} \Psi(v)\cdot\nabla_{v}\varphi(t,x,v)\|_{\infty}.
  \ena
  $$
  Notice that the supreme norms appearing in the above are finite since $\varphi \in C^\infty_0((-T\times T)\times \R^{2d})$ and $\Psi\in C^{1,1}\cap C^{2,1}(\R^d)$. 
  Next, we can bound $\Rc_4(h,R)$ as
  $$
  \Rc_4(h, R)  \leq  \frac{Th}{2}\sup_{t \in [0, T]} \|\nabla_v \Psi(v) \cdot \nabla_x \varphi(t,x,v)\|_{\infty}.
  $$
  In addition, thanks to inequality \eqref{eq:dm2xv} and Lemma \ref{lem:sumWh}, the error term $\tilde{\Rc}$ can be bounded as follows
  $$\bea
  \tilde{\Rc}(h, R) & \leq C\sum_{n=1}^N \W_h(\bar{f}^n_{h, R}, f^n_{h, R})^2 + C h^2 \sum_{n=1}^N \Big(M_{2, v}(\overline{f}^{n}_{h, R}) + M_{2, v}(f^{n}_{h, R})\Big)\\
  &\leq C(1 + h^2)\sum_{n=1}^N \W_h(\bar{f}^n_{h, R}, f^n_{h, R})^2 + C h^2 M_{2, v}(f^0) \\
  & \qquad  + C(N+1)Nh^2 (h^{1/s} + h R^{2-2s})\\
  & \leq C \Big(h\int_{\R^{2d}}\Psi(v) f_0(x,v)\,dxdv + T \|D^2 \Psi\|_\infty (h^{1/s} + h R^{2- 2s}) \Big)\\
  & \qquad+ C h^2 M_{2, v}(f^0) + C(T+1)T (h^{1/s} + h R^{2-2s}).
  \ena
  $$
  Finally, the desired estimate \eqref{eq:ineqR} follows by combining the above estimates and by setting $R = h^{-1/2}$. 
 \end{proof}

Now we are ready to prove the main Theorem \ref{thm:main}.

\begin{proof}[Proof of Theorem \ref{thm:main}]
First, thanks to Lemma \ref{lem:lp} and the assumption that $f_0\in L^p(\R^{2d})$ for some $1<p < \infty$, the constructed time-interpolation $\{f_{h}\}$ in \eqref{eq:fh2} is uniformly bounded in $L^p(\R^{2d}\times (0,T))$. Therefore there exists a $f\in L^p(\R^{2d}\times (0,T))$ such that $f_h \wgt h$ in $ L^p(\R^{2d}\times (0,T))$.  In view of equation \eqref{eq:approxeqn} of Lemma \ref{lem:aweqn}, and by using the fact that $\partial_t \varphi  +v\cdot \nabla_x \varphi - \nabla_v \Psi \cdot \nabla_v \varphi - (-\Delta_v)^s \varphi \in L^{p'}(\R^{2d}\times (0,T))$, we obtain by letting $h\gt 0$ that
$$
\bea
& \int_0^T\int_{\R^{2d}}f [\partial_t\varphi+v\cdot\nabla_x\varphi-\nabla_v \Psi\cdot\nabla_v\varphi-(-\Delta_v)^s\varphi]\,dxdvdt\\
& \qquad \qquad +\int_{\R^{2d}}f_0(x,v)\varphi(0,x,v)\,dxdv = 0.
\ena
$$
\end{proof}

\begin{remark}
By using the similar technique as in the proof of Lemma 5.8 of \cite{AguelBowles2015}, one can show that the weak solution $f$ of \eqref{eq:fracKramers} is indeed a probability density for every $t \in (0,T)$, i.e. $\int_{\R^{2d}} f(t,x ,v)dxdv = \int_{\R^{2d}} f_0(x, v)dxdv  = 1$.
\end{remark}

\section{Possible extensions to more complex systems}
\label{sec: extension}
With suitable adaptations, it should be possible, in principle, to extend the analysis of the present work to deal with more complex systems. Below we briefly discuss two such systems.
\subsection{ FKFPE with external force fields}
When an external force field, which is assumed to be conservative, is present, the SDE \eqref{SDE} becomes
\begin{equation}
\bea
& \frac{d X_t}{d t} = V_t,\\
& \frac{d V_t}{d t} = -\nabla U(X_t)- \nabla \Psi (V_t) + L_t^s,
\ena
\label{SDE2}
\end{equation}
where $U:\R^d\to\R$ is the external potential. The corresponding FKFPE \eqref{eq:fracKramers} is then given by
\begin{equation}
\begin{cases}
\partial_t f+v\cdot\nabla_x f=\div_v (\nabla V(x)f)+\div_v(\nabla \Psi(v)f)-(-\Delta_v)^s f ~~ \text{in} ~~ \R^d\times \R^d\times (0,\infty),\\
f(x,v,0)=f_0(x,v)~~\text{in}~ \R^d\times \R^d.
\end{cases}
\label{eq:fracKramers ext}
\end{equation}
One can view \eqref{SDE2} as a dissipative (frictional and stochastic noise) perturbation of the classical Hamiltonian
\begin{equation*}
\bea
& \frac{d X_t}{d t} = V_t,\\
& \frac{d V_t}{d t} = -\nabla U(X_t).
\ena
\end{equation*}
Thus  FKFPE \eqref{eq:fracKramers ext} contains both conservative and dissipative effects. To construct an approximation scheme for it, instead of the minimal acceleration cost function \eqref{eq:ch}, one would use the following minimal Hamiltonian cost function which has been introduced in \cite{DPZ13a} for the development of a variational scheme for the classical Kramers equation:
\begin{multline}
\label{def:widetildeCh}
\widetilde{C}_h(x,v;x',v'):= h\inf \bigg\{\int_0^h\bigl|\ddot{\xi}(t)+ \nabla V(\xi(t))\bigr|^2\,dt: \xi\in C^1([0,h],\R^d)
~~ \text{such that}\\~~ 
(\xi,\dot\xi)(0)=(x,v),\ (\xi,\dot\xi)(h)=(x',v')\bigg\}.
\end{multline}
Physically, the optimal value $C_h(x,v;x',v')$ measures the least deviation from a Hamiltonian flow that connects $(x,v)$ and $(x',v')$ in the time interval $[0,h]$. 

Under the assumption that $U\in C^2(\R^d)$ with $\|\nabla^2 U\|\leq C$ and using the properties of the cost function $\widetilde C_h$ established in \cite{DPZ13a} we expect that the splitting scheme \eqref{eq:step1}-\eqref{eq:step2}, where in \eqref{eq: Wh distance} the Kantorovich optimal cost functional $C_h$ is replaced by $\widetilde C_h$, can be proved to converge to a weak solution of  FKFPE \eqref{eq:fracKramers ext}.
\subsection{A multi-component FKFPE equation}
The second system is an extension of  FKFPE \eqref{eq:fracKramers} on the phase space $(x,v)\in \R^{2d}$ to a multi-component FKFPE
on the space $\x=(x_1,\ldots, x_n)\in \R^{nd}$
\begin{equation}
\begin{cases}
\label{eq: Kramers eqn ext2}
\partial_t f+\sum_{i=2}^{n}x_{i}\cdot\nabla_{x_{i-1}}f=\div_{x_n}(\nabla V(x_n)f)-(-\Delta_{x_n})^s f \quad \text{in} \quad \R^{nd}\times (0,\infty),\\
f(x_1,\ldots,x_n,0)=f_0(x_1,\ldots,x_n)\quad \text{in} \quad \R^{nd}.
\end{cases}
\end{equation}
Equation \eqref{eq: Kramers eqn ext2} with $n>2$ and $s=1$ has been studied extensively in the mathematical literature and has found many applications in different fields. For instance, it has been used as a simplified model of a finite Markovian approximation for the generalised Langevin dynamics \cite{OP11, Duong15NA} or a model of a harmonic chains of oscillators that arises in the context of non-equilibrium statistical mechanics \cite{EH00,BL08, DelarueMenozzi10}. It has also appeared in mathematical finance \cite{Pascucci2005}.  Regularity properties of solutions  to equation~\eqref{eq: Kramers eqn ext2} with $s \in (0,1]$ has been investigated recently \cite{HMP2017TMP,ChenZhang2017TMP, ChenZhang2017TMP2}.

To construct an approximation scheme for equation~\eqref{eq: Kramers eqn ext2}, instead of the minimal acceleration cost function \eqref{eq:ch}, one would use the so-called mean squared derivative cost function
\begin{equation*}
\bar{C}_{n,h}(x_{1},x_{2},\ldots,x_{n};y_{1},y_{2},\ldots,y_{n}):=h\inf\limits_{\xi}\int_0^h|{\xi}^{(n)}(t)|^{2}\,dt,
\end{equation*}
where $\mathbf{x}=(x_{1},\ldots,x_{n})\in\R^{nd},~\mathbf{y}=(y_{1},\ldots,y_{n})\in\R^{nd}$, and
the infimum is taken over all curves $\xi\in C^{n}([0,h],\R^d)$ that satisfy the boundary conditions
\begin{equation*}
(\xi,\dot{\xi},\ldots,\xi^{(n-1)})(0)=(x_{1},x_{2},\ldots,x_{n})\quad\text{and}\quad (\xi,\dot{\xi},\ldots,\xi^{(n-1)})(h)=(y_{1},y_{2},\ldots,y_{n}).
\end{equation*}
Several properties including an explicit representation of the mean squared derivative cost function has been studied in \cite{DuongTran2017a} and a variational formulation using this cost function for equation \eqref{eq: Kramers eqn ext2} with and $s=1$ has been developed recently in~\cite{DuongTran2018}. 

Using the properties of the cost function $\bar C_{n,h}$ established in \cite{DuongTran2017a} it should be possible, in principle, to adapt the analysis of the present paper to show that, under suitable assumptions, the splitting scheme \eqref{eq:step1}-\eqref{eq:step2} with $C_h$ being substituted by $\bar C_{n,h}$, converges to a weak solution of the multi-component FKFPE \eqref{eq: Kramers eqn ext2}.

%For acknowledgements section, please don't number the section, please begin it with \section*{Acknowledgements}
\section*{Acknowledgments} This work was partially done during the authors' stay at Warwick Mathematics Institute. The authors thank WMI for its 
great academic and administrative support. M. H. Duong was also supported by ERC Starting Grant 335120. 

\bibliographystyle{abbrv}
\bibliography{ref}

\end{document}